\newtheorem{thm}{Theorem}[section]
\newtheorem{cor}[thm]{Corollary}
\newtheorem{lem}[thm]{Lemma}
\newtheorem{prop}[thm]{Proposition}
\theoremstyle{definition}
\newtheorem{dfn}[thm]{Definition}
\newtheorem{thmdfn}[thm]{Theorem and Definition}
\newtheorem{rem}[thm]{Remark}
\newtheorem{ex}[thm]{Example}
\newtheorem*{claim*}{Claim}
\theoremstyle{remark}
\newtheorem*{ac}{Acknowlegments}
\numberwithin{equation}{thm}
\def\d{\partial}
\def\natu{^\natural}
\def\Ext{\mathrm{Ext}}
\def\Hom{\mathrm{Hom}}
\def\Z{\mathbb{Z}}
\begin{document}
%\allowdisplaybreaks
\setlength{\baselineskip}{15pt}
\title{A lifting problem for DG modules}
\date{\today}
\author{Maiko Ono}
\address{Graduate School of Natural Science and Technology, Okayama University, Okayama, 700-8530,  Japan} 
\email{ptr56u9x@s.okayama-u.ac.jp}
\author{Yuji Yoshino}
\address{Graduate School of Natural Science and Technology, 
Okayama University, Okayama, 700-8530, Japan}
\email{yoshino@math.okayama-u.ac.jp}
\footnotetext{2010 {\it Mathematics Subject Classification}. Primary 13D07 ; Secondary 16E45 } 
\footnotetext{{\it Key words and phrases.} DG algebra, DG module, Liftings }
\begin{abstract} 
Let $B = A\langle X | dX=t \rangle$  be an extended DG algebra by the adjunction of variable of positive even degree $n$,  and let  $N$  be a semi-free DG $B$-module that is assumed to be bounded below as a graded module. 
We prove in this paper that  $N$ is liftable to $A$ if   $\Ext_B^{n+1}(N,N)=0$. 
Furthermore such a lifting is unique up to DG isomorphisms if  $\Ext_B^{n}(N,N)=0$. 
\end{abstract}
\maketitle

%\tableofcontents
%%%%%%%%%%%%%%%%%%%%%%%%%%%%%%%
%%%%%%%%%%%%%%%%%%%%%%%%%%%%%%%
\section{introduction}

Lifting problems of algebraic structures appear in various phases of algebra theory.  
In fact many authors have studied variants of liftings in their own fields such as modular representation theory, deformation theory and commutative ring theory etc. 
From the particular view point of ring theory,
the lifting problem and their weak variant called weak lifting problem  were systematically investigated firstly by M.Auslander, S.Ding and \O.Solberg \cite{ADS}.  
The second author of the present paper extended the lifting problems to chain complexes and developed a theory of weak liftings for complexes in \cite{Y}. 
On the other hand in the papers \cite{NS,NY},  the lifting or weak lifting problems were generalized into the corresponding problems for DG modules, however they only considered the cases of Koszul complexes that are DG algebra extensions by adding one variable of odd degree. 
In contrast, our main target in the present paper is the lifting problem for DG algebra extension obtained by adding a variable of  positive {\it even}  degree. 

Let $A$ be a commutative DG algebra over a commutative ring $R$, and $X$  be a variable of degree $n=|X|$. 
Then we can consider the extended DG algebra $A\langle X | dX=t \rangle$ that is obtained by the adjunction of variable $X$ with relation $dX =t$, 
 where $t$ is a cycle in $A$  of degree $n-1$. 
Note that if $n$ is odd,  then $A\langle X | dX=t \rangle = A \oplus XA$ as a right $A$-module, which is somewhat similar to a Koszul complex.
In contrast, if $n$ is even,  $A\langle X | dX=t \rangle = \bigoplus _{i \ge 0} X^{(i)}A$ is a free algebra over $A$ with divided variable $X$ that resembles a polynomial ring.  
In each case there is a natural DG algebra homomorphism $A \to A\langle X | dX=t \rangle$.  
See \S 2 below for more detail.  

In general,  let $A\to B$ be a DG algebra homomorphism. 
Then a DG $B$-module $N$ is said to be {\it liftable}  to $A$ if there is a DG $A$-module  $M$ with the property  $N \cong  B\otimes_A M$ as DG $B$-modules. 
In such a case $M$ is called a {\it lifting} of $N$. 
We are curious about the lifting problem for the particular case that $B = A\langle X | dX=t \rangle$. 
The both papers  \cite{NS, NY} treated the lifting problem in such cases but with the assumption that  $|X|$ is odd.
They actually showed that the vanishing of  $\Ext_B^{|X|+1}(N,N)$ implies the weak liftability of $N$. 

We consider the lifting problem for $A \to B = A\langle X | dX=t \rangle$ in the case that $|X|$ is positive and even. 
Surprisingly enough we are able to show in this paper that the vanishing of $\Ext_B^{|X|+1}(N,N)$ implies the liftablity (not a weak liftability) and moreover  the vanishing of $\Ext_B^{|X|}(N,N)$ implies the uniqueness of such a lifting.  
To say more precisely,  the following is our main theorem of this paper that answers the question raised in \cite[Remark 3.8]{NY}. 

\vspace{6pt} 
\noindent 
{\bf Theorem.}\ (Theorem \ref{main} and Theorem \ref{unique})
{\it 
Let  $A$ be a DG $R$-algebra, where $R$ is a commutative ring. 
Let  $B=A\langle X| dX=t\rangle$ denote a DG $R$-algebra obtained from $A$ by the adjunction of  variable $X$ of  
positive even degree. 
Further assume that $N$ is a semi-free DG $B$-module.
\begin{enumerate}
\item[$(1)$]  Under the assumption that $N$ is bounded below as a graded $R$-module, if $\Ext_B^{|X|+1}(N,N)=0$, then $N$ is liftable to $A$.
\item[$(2)$] If $N$ is liftable to $A$ and if $\Ext_B^{|X|}(N,N)=0$, then a lifting of $N$ is unique up to DG $A$-isomorphisms. 
\end{enumerate}
}
\vspace{6pt} 

In \S 2 we prepare the necessary definitions and notations for DG algebras and DG modules that will be used in this paper.
In \S 3 we introduce the notion of $j$-operator and give several useful properties of $j$-operators. 
\S 4 is a main body of the present paper, where we prove the  main theorem above. 

\begin{ac}
The authors are grateful to Dr. H. Minamoto for his many helpful suggestions.  
In fact the use of $j$-operators in our proof of the main theorem were  implicitly suggested  to the authors  by  him. 
The second author was supported by JSPS Grant-in-Aid for Scientific Research 26287008.
\end{ac}

%%%%%%%%%%%%%%%%%%%%%%%%%%%%%%%%%%%%
%%%%%%%%%%%%%%%%%%%%%%%%%%%%%%%%%%%%
%%%%%%%%%%%%%%%%%%%%%%%%%%%%%%%%%%%%
\section{Preliminary on DG algebras and DG modules}

We summarize some definitions and notations that will be used in this paper.
Throughout the paper,  $R$ always denotes a commutative ring. 
Basically all modules considered in the paper are meant to be $R$-modules and all algebras are $R$-algebras.

Let $A=\bigoplus_{n\ge0}A_n$ be a non-negatively graded $R$-algebra equipped with a graded $R$-linear homomorphism $d^A:A \to A$ of degree $-1$. 
Then $A=\left( \bigoplus_{n\ge0}A_n, \ d^A \right)$ is called a {\it (commutative) differential graded $R$-algebra}, or a {\it DG $R$-algebra} for short, if it satisfies the following conditions:
\begin{enumerate}
\item For homogeneous elements $a$ and $b$ of $A$, $ab=(-1)^{|a||b|}ba$ where $|a|$ denotes the degree of $a$. 
Moreover if $|a|$ is odd, then $a^2=0$.
\item The graded $R$-algebra $A$ has a differential structure, by which we mean that $\left( d^A \right)^2=0$.
\item The differential $d^A$ satisfies the derivation property; $d^A(ab)=d^A(a)b+(-1)^{|a|}ad^A(b)$ for homogeneous elements $a$ and $b$ of $A$.
\end{enumerate}

Note that all DG algebras considered in this paper are non-negatively graded $R$-algebras. 
We often denote by  $A\natu$ the underlying graded $R$-algebra for a DG $R$-algebra $A$. 

Let $f:A \to B$ be a graded $R$-algebra homomorphism between DG $R$-algebras. 
By definition  $f$  is a {\it DG algebra homomorphism} if it is a chain map, i.e., $d^B f=f d^A$.

Let $A$ be a DG $R$-algebra and $M=\bigoplus_{n\in\Z}M_n$ be a graded left  $A$-module equipped with a graded $R$-linear map $\d^M:M \to M$ of degree $-1$.
Then $M=\left( \bigoplus_{n\in\Z}M_n,\d^M \right)$ is called a left {\it  differential graded $A$-module}, or a {\it DG $A$-module} for short,
if it satisfies the following conditions:
\begin{enumerate}
\item The graded module $M$ has a differential structure, i.e.,  $\left( \d^M \right)^2 = 0$.
\item The differential $\d^M$ satisfies the derivation property over $A$, i.e., $\d^M(am)=d^A(a)m+(-1)^{|a|}a\d^M(m)$ for $a\in A$ and $m\in M$.
\end{enumerate}
Note that every left differential graded $A$-module $M$  can be regarded as a right graded differential $A$-module by defining the right action as 
$ma = (-1)^{|a||m|} am$   for  $a \in A$  and  $m \in M$. 
Similarly to the case of DG algebras, $M\natu$ denotes the underlying graded $A\natu$-module for a DG $A$-module $M$.

To recall the definition of tensor products of DG modules, let $M$ and $N$ be DG $A$-modules. 
Then the graded tensor product  $M \natu \otimes_{A\natu} N\natu$ of graded modules  has the differential mapping defined by 
$$
\d^{M\otimes_AN}(m\otimes n) =\d^M(m)\otimes n +(-1)^{|m|} m\otimes \d^N(n) \quad \text{for} \ \  m \in M \ \  \text{and} \  \  n \in N .
$$
The tensor product  of DG $A$-modules is denoted by $M \otimes _A N$, by which we mean the DG $A$-module $\left( M \natu \otimes_{A\natu} N\natu,   \d^{M\otimes_AN}\right)$. 

If  $A \to B$ is a DG algebra homomorphism, and if $M$ is a DG $A$-module, then $B\otimes_A M$ is regarded as a DG $B$-module via action $b(b'\otimes m) =bb'\otimes m$ for $b, b' \in B$  and $m \in M$.

\begin{dfn}%lifting DG module
Let $A\to B$ be a DG algebra homomorphism.
A DG $B$-module $N$ is called  {\it liftable} to $A$ if there exists a DG $A$-module $M$ such that $N$ is isomorphic to $B\otimes_A M$ as DG $B$-modules.
In this case, $M$ is called a {\it lifting} of $N$.
\end{dfn}

Let $A$ be a DG $R$-algebra and let $M$,  $N$ be DG $A$-modules.

A graded $R$-module homomorphism $f : M\to N$ of degree $r \ (r \in \Z)$ is, by definition, an $R$-linear mapping from $M$ to $N$ with  $f(M_n) \subseteq N_{n+r}$ for all $n \in \Z$. 
In such a case we denote  $|f| =r$. 
The set of all graded $R$-module homomorphisms of degree $r$ is denoted by $\Hom _R (M, N)_r$.  
Then $\Hom _R (M, N) = \bigoplus _{r \in \Z}  \Hom _R(M, N)_r$ is naturally a graded $R$-module.   
A graded $R$-module homomorphism $f \in \Hom _R (M, N)_r$ is called {\it $A$-linear} if it satisfies $f(am)=(-1)^{r|a|}af(m)$ for $a\in A$ and $m\in M$. 
We denote by $\Hom _A (M,N)_r$ the set of all $A$-linear homomorphisms of degree $r$. 
Then $\Hom_A(M,N)=\bigoplus_{r\in\Z}\Hom _A(M, N)_r$ has a structure of graded $A$-module, 
on which we can define the differential as follows: 
$$
\d^{\Hom_A(M,N)}(f) =\d^N f - (-1)^{|f|} f \d^M.
$$
In such a way we have defined the DG $A$-module $\Hom_A(M,N)$. 

By definition, a {\it DG $A$-homomorphism} $f : M \to N$  is  an $A$-linear homomorphism of degree $0$ that is a cycle as an element of  $\Hom_A(M,N)$.
A DG $A$-homomorphism $f:M\to N$ gives a {\it DG $A$-isomorphism}  if $f$ is invertible as a graded $A$-linear homomorphism. 
On the other hand a DG $A$-homomorphism $f : M \to N$ is called  a {\it quasi-isomorphism} if the homology mapping  $H(f) : H(M) \to H(N)$ is an isomorphism of graded $R$-modules.  

A DG $A$-module $F$ is said to be {\it semi-free} if $F\natu$ possesses a graded  $A\natu$-free basis $E$ which  decomposes as a disjoint union $E=\bigsqcup_{i\ge0} E_i$ of subsets indexed by natural numbers and satisfies  $\d^M(E_i)\subseteq \sum_{j < i} A E_{j}$ for $i \ge 0$.
A {\it semi-free resolution} of a DG $A$-module $M$ is a DG $A$-homomorphism $F\to M$ from a semi-free DG $A$-module $F$ to $M$,  which is a quasi-isomorphism.
It is known that any DG $A$-module has a semi-free resolution.
See \cite[Theorem 8.3.2 ]{AFH}.
Given a DG $A$-module $M$, and taking a semi-free resolution $F_M \to M$, one can define the $i$th extension module by  
 $$\Ext_A^i(M,N) := {H}_{-i}\left( \Hom_A(F_M,N)\right), $$  which is known to be independent of choice of a semi-free resolution of $M$ over $A$.
See \cite[Proposition 1.3.2.]{A}.

There is a well-known way of construction of a DG algebra that kills a cycle by adjunction of a variable.  
See \cite{A},\cite{G} and \cite{T} for details.
To make it more explicit, let $A$ be a DG $R$-algebra and take a homogeneous cycle $t$  in $A$.
We are able to construct an extended DG $R$-algebra $B$ of $A$ by the adjunction of a variable $X$ with $|X|=|t|+1$ which kills the cycle $t$ in the following way.
In both cases, we denote $B$ by $A\langle X|dX=t\rangle$.

\vspace{6pt}
\par\noindent 
(1) If $|X|$ is odd, then $B =A\oplus XA$ with algebra structure $X^2=0$ in which the differential is defined  by $d^B(a+Xb) =d^A(a)+tb-Xd^A(b)$.

\vspace{6pt}
\par\noindent 
(2) If $|X|$ is even, then $B=\bigoplus_{i\ge0}X^{(i)}A$ which is an algebra with divided powers of variable $X$.
Namely it has the multiplication structure $X^{(i)}X^{(j)}=\binom{i+j}{i}X^{(i+j)}$ for $i,j\in\Z_{\ge0}$ with $|X^{(i)}| = i|X|$. 
(Here we use the convention $X^{(0)}=1,X^{(1)}=X$.) 
Adding to the derivation property, the differential on $B$ is simply defined by the rule $d^B(X^{(i)})=tX^{(i-1)}$ for $i >0$, 
hence  it is given as follows for general elements: 
$$
d^B\left( \sum _{i=0}^n  X^{(i)}a_{i} \right) = \sum _{i=0} ^{n-1} X^{(i)} \left\{ d^A(a_i) + ta_{i+1}  \right\} + X^{(n)} d^A (a_n) .    
$$ 
\vspace{6pt}

In each case there is a natural DG $R$-algebra homomorphism  $\nolinebreak{A \to B = A\langle  X | dX =t \rangle}$. 
As we have mentioned in the introduction, we are interested in DG $B$-modules $N$ that are liftable to $A$, 
particularly in the case (2) above, that is  $|X|$  is even. 
In fact, S. Nasseh and Y. Yoshino have studied a liftable condition, or more generally a weak liftable condition,  for DG $B$-modules in the case where $|X|$ is odd. 
See \cite[Theorem 3.6]{NY} for more detail.

%%%%%%%%%%%%%%%%%%%%%%%%%%%%%%%%%%%%%%%%%%%%%%%%%%%%%%%%%%%%%%%%%
\section{the $j$-operator}

As in the last of the previous section, $A$ is a DG $R$-algebra in which we take a cycle $t$, that is, $d^At =0$. 
Our  specific assumption here is that $|t|$ is an odd non-negative integer. 
We denote by $B=A\langle X|dX=t\rangle$ the extended DG algebra of $A$ by the adjunction of variable $X$ that kills the  cycle $t$.
Since  $|X|$  is even,  note that 
\begin{equation}\label{decomposition of B} 
B =\bigoplus_{i\ge0}X^{(i)}A, 
\end{equation}
where the right hand side is a direct sum of right $A$-modules. 

Let $N$ be a DG $B$-module, and we assume the following conventional assumption:  
\begin{equation}\label{assumption} 
\text{There is a graded $A\natu$-module $M$ satisfying $N\natu = B\natu\otimes_{A\natu}M$.\hphantom{OOOOOOOO}} 
\end{equation}
\noindent 
Note that if $N$ is a semi-free DG $B$-module, then, since  $N\natu$  is a free $B\natu$-module,  it is always under such a circumstance.
By virtue of the decomposition (\ref{decomposition of B}),   we may write $N\natu$ as follows under the assumption (\ref{assumption}): 
\begin{equation}\label{decomposition of N} 
N\natu = \bigoplus_{i \ge0}X^{(i)}M.
\end{equation}
Note that there are equalities of $R$-modules 
$$
N_n = \bigoplus _{i \ge 0, \ k + i|X| = n} X^{(i)}M_k, 
$$
for all $n \in \Z$.

Now let  $r$ be an integer and let  $f \in \Hom _R(N, N)_r$. 
Recall that  $f$ is $R$-linear with  $f(N_n) \subseteq N_{n+r}$  for all $n \in \Z$.  
Given such an $f$, we consider the restriction of $f$ on $M$, i.e.  $f|_M \in \Hom _R (M, N)_r$. 
Along the decomposition (\ref{decomposition of N}),  one can decompose $f|_M$ into the following form: 
\begin{equation}\label{expansion}
f|_M= \sum_{i \ge0} X^{(i)}f_i,
\end{equation}
where each $f_i \in \Hom _R (M, M)_{r-i|X|}$.  
Actually, for $m \in M$, there is a unique decomposition $f (m) = \sum _{i} X^{(i)} m_i$  with $m_i \in M$ along (\ref{decomposition of N}). 
Then $f_i$  is defined by  $f_i(m) = m_i$.      
Note that  the decomposition (\ref{expansion}) is unique as long as we work under the fixed setting (\ref{decomposition of N}). 
We call the equality (\ref{expansion}) {\it the expansion of $f|_M$} and often call $f_0$ {\it the constant term of $f|_M$}.

Taking the expansion of $f|_M$ as in (\ref{expansion}), we consider the graded $R$-linear homomorphism   
\begin{equation*}
\varphi =\sum_{i\ge0}X^{(i)}f_{i+1},
\end{equation*}
which belongs to $\Hom _R (M, N) _{r-|X|}$. 
This $R$-linear mapping $\varphi$ can be extended to an $R$-linear mapping  $j(f)$  on $N$ by setting  
$j(f) (X^{(i)}m_i ) = X^{(i)} \varphi (m_i)$  for each  $i \geq 0$  and  $m_i \in M$. 
In such a way we obtain  $j(f) \in \Hom _R (N, N) _{r-|X|}$. 

Summing up the argument above we get the mapping  $j : \Hom _R (N, N) _r \to \Hom _R (N, N) _{r-|X|}$ for all $r \in \Z$, which we call {\it the $j$-operator} on $\Hom_R(N, N)$. 
For the later use we remark that the actual computation of $j(f)$  is carried out in the following way; 
\begin{equation}\label{jf}
j(f)(X^{(n)}m)=X^{(n)}j(f)(m)=X^{(n)}\sum_{i\ge0}X^{(i)}f_{i+1}(m)=\sum_{i\ge0}X^{(n+i)}\binom{n+i}{i}f_{i+1}(m)
\end{equation}
for $n\ge 0$ and $m\in M$.

\begin{thmdfn}\label{j-operator}
Under the assumption (\ref{assumption}) we can define a graded $R$-linear mapping  $j : \Hom _R (N, N) \to \Hom _R (N, N)$ of degree $-|X|$, 
which we call {\it the $j$-operator} on $\Hom _R(N, N)$. 
For any $f \in \Hom _R (N, N)_r$, taking the expansion (\ref{expansion}) of $f|_M$ along the decomposition (\ref{decomposition of N}), $j (f)$  maps  $X^{(n)} m$  to 
$\sum_{i\ge0}X^{(n+i)}\binom{n+i}{i}f_{i+1}(m)$  as in (\ref{jf}). 
\end{thmdfn}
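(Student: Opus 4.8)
The plan is to treat this as a well-definedness statement for the construction already described, and to verify the following points in order: (i) the expansion (\ref{expansion}) of $f|_M$ exists and is unique, with each $f_i$ an $R$-linear map of the asserted degree $r-i|X|$; (ii) the rule extending $\varphi = \sum_{i\ge0} X^{(i)} f_{i+1}$ to all of $N$ yields a well-defined $R$-linear endomorphism $j(f)$ of degree $r-|X|$; and (iii) the assignment $f \mapsto j(f)$ is itself $R$-linear, so that $j$ is a graded $R$-linear map $\Hom_R(N,N) \to \Hom_R(N,N)$ of degree $-|X|$.

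For (i) I would fix $f \in \Hom_R(N,N)_r$ and $m \in M$. Since $f(m)$ lies in $N\natu = \bigoplus_{i\ge0} X^{(i)} M$ by (\ref{decomposition of N}) and elements of a direct sum have finite support, there are unique $m_i \in M$, almost all zero, with $f(m) = \sum_i X^{(i)} m_i$; putting $f_i(m) := m_i$ and invoking uniqueness across all $m$ forces each $f_i$ to be $R$-linear, and tracking degrees through $N_n = \bigoplus_{k+i|X|=n} X^{(i)} M_k$ shows $f_i \in \Hom_R(M,M)_{r-i|X|}$. Hence $f|_M = \sum_i X^{(i)} f_i$ is the unique such expansion and $\varphi$ is a well-defined element of $\Hom_R(M,N)_{r-|X|}$ (again, finite on each element).

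For (ii), every element of $N$ has a unique finite expression $\sum_i X^{(i)} m_i$ with $m_i \in M$, so $j(f)\bigl(\sum_i X^{(i)} m_i\bigr) := \sum_i X^{(i)} \varphi(m_i)$ unambiguously defines an $R$-linear map $N \to N$; using the divided-power multiplication $X^{(n)} X^{(i)} = \binom{n+i}{i} X^{(n+i)}$ one checks for $m \in M$ that $j(f)(X^{(n)}m) = X^{(n)}\varphi(m) = \sum_{i\ge0} \binom{n+i}{i} X^{(n+i)} f_{i+1}(m)$, which is exactly (\ref{jf}), and a degree count gives $j(f) \in \Hom_R(N,N)_{r-|X|}$. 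For (iii), the uniqueness in (i) makes the expansion additive and $R$-homogeneous in $f$, and the passage from the $f_i$ to $j(f)$ is visibly additive and $R$-homogeneous, so $j(f+g) = j(f) + j(g)$ and $j(cf) = c\,j(f)$ for $c \in R$.

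I do not expect a serious obstacle here; the statement is preparatory and the verification is essentially bookkeeping. The one point I would be careful about is that the sums $\sum_i X^{(i)} f_{i+1}$ and $\sum_i X^{(i)} \varphi(m_i)$ are a priori infinite, so I would want to emphasize that they are harmless because the relevant graded pieces of $N$ are direct sums and hence every element has finite support --- this works even though $M$ (and therefore $N$) need not be assumed bounded in this section.
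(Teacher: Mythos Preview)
Your proposal is correct and matches the paper's treatment: the paper gives no separate proof for this Theorem--Definition, since the construction and formula (\ref{jf}) are established in the paragraphs immediately preceding it, and your points (i)--(iii) simply make explicit the bookkeeping (uniqueness of the expansion, $R$-linearity, degree count) that the paper leaves to the reader. Your remark about finite support in the direct sum is the right way to justify why the infinite-looking sums cause no trouble.
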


\begin{rem}
The notion of $j$-operator was first introduced by J. Tate in the paper \cite{T} and extensively used by T.H. Gulliksen and G. Levin \cite{T}. 
\end{rem}

In the rest of the paper we always assume the condition  (\ref{assumption}) for a DG $B$-module $N$. 

\begin{dfn}
We denotes by $\mathcal{E}$ the set of all $B$-linear homomorphisms on $N$, i.e., $\mathcal{E}=\Hom_B(N,N)$. 
Note that  $\mathcal{E} \subseteq \Hom _R (N,N)$  and that a homogenous element $f \in \Hom _R(N, N)$ belongs to $\mathcal{E}$ if and only if $f (bn) = (-1)^{|b||f|} b f(n )$ for $b \in B$  and  $n \in N$.

We say that a graded $R$-linear mapping $\delta \in \Hom _R(N, N)$ is a {\it $B$-derivation} if it satisfies  $|\delta | =-1$ (i.e., $\delta \in \Hom _R (N, N)_{-1}$)  and 
$\delta(bn)=d^B(b)n+(-1)^{|b||\delta|}b\delta(n)$ for $b \in B$  and  $n \in N$. 
We denote by $\mathcal{D}$ the set of all $B$-derivations on $N$. 
\end{dfn}

\begin{rem}\label{remark on D} 
　\par
\begin{enumerate}
\item
Assume  that an $R$-linear mapping  $\delta : N \to N$ satisfies the derivation property $\delta(bn)=d^B(b)n+(-1)^{|b||\delta|}b\delta(n)$. 
 Since $|d^B(b)n| = |b| + |n| -1$  and $|b\delta(n)| = |b| +|n| + |\delta|$, if  $|\delta | \not= -1$  then $\delta$  is never a graded  mapping. 
\item
If  $\delta \in \mathcal{D}$  then the actual computation for $\delta$ is carried out by the following rule: 
\begin{equation*}
\delta\left(\sum_{i\ge 0}X^{(i)}m_i\right)=\sum_{i\ge 0}X^{(i)}\left\{ tm_{i+1}+\delta(m_i) \right\} . 
\end{equation*}
\item
If  $\delta, \delta ' \in \mathcal{D}$  then it is easy to see that  $\delta - \delta ' $  is in fact  $B$-linear, hence  $\delta - \delta '  \in \mathcal{E}$. 
\end{enumerate}
\end{rem}

Note that the both  $\mathcal{E}$  and  $\mathcal{D}$  are graded $R$-submodules of $\Hom _R(N, N)$. 

\begin{lem}\label{extension}
Under the assumption (\ref{assumption}), 
any $A$-linear homomorphism  $\alpha : M \to M$  is uniquely extended to $\tilde{\alpha} \in \mathcal{E}$ such that 
the constant term of the expansion of $\tilde{\alpha}  |_M$ equals $\alpha$. 
Similarly    
any $A$-derivation $\beta : M \to M$  is uniquely extended to $\tilde{\beta} \in \mathcal{D}$ such that 
the constant term of the expansion of $\tilde{\beta} |_M$ equals $\beta$. 

In both cases, we have  $j(\tilde{\alpha}) = 0$  and  $j(\tilde{\beta}) = 0$.   
\end{lem}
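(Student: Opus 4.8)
The plan is to write down $\tilde\alpha$ and $\tilde\beta$ explicitly and then read off $j(\tilde\alpha)=0$, $j(\tilde\beta)=0$ and the uniqueness statements directly from the formula (\ref{jf}).

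First I would treat the $A$-linear map $\alpha$, of degree $r=|\alpha|$ say. On $N\natu=B\natu\otimes_{A\natu}M$ I would define $\tilde\alpha(b\otimes m)=(-1)^{r|b|}\,b\,\alpha(m)$ for homogeneous $b\in B\natu$ and $m\in M$. A short check shows this is $A\natu$-balanced, hence well defined, and $B\natu$-linear of degree $r$, so $\tilde\alpha\in\mathcal E$. Since $|X^{(i)}|=i|X|$ is even, $\tilde\alpha(X^{(i)}m)=X^{(i)}\alpha(m)$; in particular $\tilde\alpha|_M=\alpha$, so the expansion (\ref{expansion}) of $\tilde\alpha|_M$ has constant term $\alpha$ and all higher coefficients zero, and (\ref{jf}) immediately gives $j(\tilde\alpha)=0$. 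For uniqueness, any $g\in\mathcal E$ is determined by $g|_M$ since $N=B\cdot M$; and from (\ref{jf}) one sees that $j(g)=0$ exactly when $g|_M$ agrees with its own constant term, so requiring constant term $\alpha$ together with $j(g)=0$ forces $g|_M=\alpha$ and hence $g=\tilde\alpha$.

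For the $A$-derivation $\beta$ (which has degree $-1$, cf.\ Remark \ref{remark on D}), I would take the description in Remark \ref{remark on D}(2) as a \emph{definition}, setting
$$
\tilde\beta\Big(\sum_{i\ge0}X^{(i)}m_i\Big)=\sum_{i\ge0}X^{(i)}\big(tm_{i+1}+\beta(m_i)\big),
$$
a graded $R$-linear endomorphism of $N$ of degree $-1$ with $\tilde\beta|_M=\beta$. The substantive step is to check that $\tilde\beta$ is a $B$-derivation, i.e.\ $\tilde\beta(bn)=d^B(b)n+(-1)^{|b|}b\,\tilde\beta(n)$. Because the derivation identity for a product $bb'$ follows formally from the identities for $b$ and for $b'$, it suffices to verify it when $b$ ranges over the algebra generators: for $b=a\in A$ one uses $d^B(a)=d^A(a)$, the evenness of $|X^{(i)}|$ (so $a$ and $X^{(i)}$ commute), and the $A$-derivation property of $\beta$; for $b=X$ one uses $d^B(X)=t$, the oddness of $|t|$, and the divided-power product $X^{(1)}X^{(i)}=(i+1)X^{(i+1)}$. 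Granting this, $\tilde\beta\in\mathcal D$, and since $\tilde\beta|_M=\beta$ has trivial expansion, (\ref{jf}) again yields $j(\tilde\beta)=0$. Uniqueness follows from Remark \ref{remark on D}(3): two such derivations differ by an element of $\mathcal E$ whose restriction to $M$ vanishes, hence which vanishes on $N=B\cdot M$.

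The step I expect to be the main obstacle is the verification that the displayed formula really defines a $B$-derivation, and inside it the case $b=X$: the coefficients $(i+1)$ produced by $X^{(1)}X^{(i)}=(i+1)X^{(i+1)}$ must recombine correctly with the two occurrences of $t$ — one from $d^B(X)=t$ and one from the defining formula — for the two sides to match term by term. All sign questions are innocuous because every $X^{(i)}$ lives in even degree, so the only genuine bookkeeping is with the divided-power coefficients.
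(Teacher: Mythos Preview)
Your proposal is correct and follows essentially the same route as the paper: both construct $\tilde\alpha$ and $\tilde\beta$ by the identical explicit formulas (the paper writes them as $B\otimes_A\alpha$ and $B\otimes_A\beta$ and then displays exactly your expressions), and both read off $j(\tilde\alpha)=j(\tilde\beta)=0$ from the vanishing of the higher expansion coefficients. Your write-up is more thorough than the paper's in that you actually verify the $B$-derivation identity for $\tilde\beta$ on the generators $a\in A$ and $X$, a check the paper simply omits.

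The only point where you and the paper diverge is the packaging of uniqueness. The paper reads ``extended'' to mean $\tilde\alpha|_M=\alpha$ (respectively $\tilde\beta|_M=\beta$), so that the constant-term clause is automatic, and then defers uniqueness to the next lemma (Lemma~\ref{AB-linear}), which says an element of $\mathcal E$ (or a difference of two elements of $\mathcal D$) is determined by its restriction to $M$. You instead argue uniqueness by imposing both ``constant term $\alpha$'' and ``$j(g)=0$'' and observing this forces $g|_M=\alpha$. Since $j(g)=0$ is exactly the statement that $g|_M$ equals its constant term, the two arguments are equivalent; but note that in the lemma as stated $j(\tilde\alpha)=0$ is a \emph{conclusion}, not part of the uniqueness hypothesis, so the paper's reading of ``extended'' is the intended one.
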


\begin{proof}
In each case the extension is obtained by making the tensor product with $B$ over $A$: 
$$
\tilde{\alpha}  = B \otimes _A \alpha, \quad \tilde{\beta}  = B \otimes _A \beta. 
$$ 
More precisely, any element $n \in N$ is written as $n = \sum_{i \ge 0}X^{(i)}m_i$ for $m_i \in M$ along (\ref{decomposition of N}), and 
taking into account the linearity of $\tilde{\alpha}$  and the derivation property of $\tilde{\beta}$, we can define them by the following equalities:  
$$
\tilde{\alpha} (n) = \sum_{i\ge 0}X^{(i)}\alpha (m_i), \quad \tilde{\beta} (n) = \sum_{i\ge 0}X^{(i)}\left\{ tm_{i+1}+\beta(m_i) \right\}. 
$$ 
Their uniqueness follows from the next lemma. 
\end{proof}

\begin{lem}\label{AB-linear}
Assume that $f, g \in \mathcal{E}$ and  $\delta, \delta' \in \mathcal{D}$.
Then the following assertions hold. 
\begin{enumerate}
\item[$(1)$]  $f=g$ if and only if $f|_M=g|_M$.
\item[$(2)$]  $\delta=\delta'$ if and only if $\delta|_M=\delta'|_M$.
\end{enumerate}
\end{lem}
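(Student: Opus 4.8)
The plan is to prove both statements simultaneously by exploiting the decomposition (\ref{decomposition of N}), namely $N\natu=\bigoplus_{i\ge0}X^{(i)}M$, together with the defining properties of elements of $\mathcal{E}$ and $\mathcal{D}$. The forward implications are trivial, so the content is in the converses: knowing the action of $f$ (resp.\ $\delta$) on $M$ alone, one must recover its action on all of $N$. The key observation is that every $n\in N$ can be written uniquely as $n=\sum_{i\ge0}X^{(i)}m_i$ with $m_i\in M$, so it suffices to compute $f(X^{(i)}m_i)$ and $\delta(X^{(i)}m_i)$ from the respective linearity/derivation property.

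For part $(1)$: if $f\in\mathcal{E}$ is $B$-linear of degree $|f|$, then for $m\in M$ and $i\ge0$ we have $f(X^{(i)}m)=(-1)^{|X^{(i)}||f|}X^{(i)}f(m)$. Since $|X^{(i)}|=i|X|$ is even, the sign is $+1$, so $f(X^{(i)}m)=X^{(i)}f(m)$, which is completely determined by $f|_M$. Summing over $i$ gives $f(n)=\sum_{i\ge0}X^{(i)}f(m_i)$, so $f|_M=g|_M$ forces $f=g$. For part $(2)$: if $\delta\in\mathcal{D}$, the derivation property gives $\delta(X^{(i)}m)=d^B(X^{(i)})m+(-1)^{|X^{(i)}||\delta|}X^{(i)}\delta(m)$; using $d^B(X^{(i)})=tX^{(i-1)}$ and again that $|X^{(i)}|$ is even, this equals $tX^{(i-1)}m+X^{(i)}\delta(m)=X^{(i-1)}(tm)+X^{(i)}\delta(m)$, where $tm\in M$ since $t\in A$. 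Hence $\delta(X^{(i)}m)$ is determined by $\delta|_M$, and summing over $i$ recovers the formula of Remark \ref{remark on D}(2), so $\delta|_M=\delta'|_M$ forces $\delta=\delta'$.

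I do not expect any genuine obstacle here; the only point requiring a small amount of care is checking that the relevant signs vanish, which rests entirely on $|X|$ being even, and on noting that $tm$ and $d^B$ applied to the coefficient live in the right places so that the resulting expressions stay within the fixed decomposition (\ref{decomposition of N}). This lemma is really just the bookkeeping that makes the extension constructions of Lemma \ref{extension} well-defined, and its proof is a direct unwinding of definitions.
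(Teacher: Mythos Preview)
Your proof is correct and matches the paper's approach for part~(1) essentially verbatim. For part~(2) there is a small organizational difference worth noting: the paper observes (via Remark~\ref{remark on D}(3)) that $\delta-\delta'\in\mathcal{E}$ whenever $\delta,\delta'\in\mathcal{D}$, and then simply invokes part~(1) to conclude $\delta-\delta'=0$; you instead unwind the derivation rule directly on each $X^{(i)}m$. Both arguments are equally elementary and rest on the same ingredients (the decomposition~(\ref{decomposition of N}) and the defining identities of $\mathcal{E}$ and $\mathcal{D}$), so this is a difference of packaging rather than of substance.
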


\begin{proof}
$(1)$
Assume $f|_M=g|_M$.
For each $n\in N $ we decompose it into the form $n=\sum_{i \ge 0}X^{(i)}m_i$ along the decomposition (\ref{decomposition of N}). 
Since $f$ and $g$ are  $B$-linear, we have   
\begin{equation*}
f\left(\sum_{i \ge 0}X^{(i)}m_i\right)=\sum_{i \ge 0}X^{(i)}f(m_i)=\sum_{i\ge 0}X^{(i)}g(m_i)=g\left(\sum_{i \ge 0}X^{(i)}m_i\right), 
\end{equation*}
and hence $f=g$.

$(2)$ 
Assume $\delta|_M=\delta'|_M$.
Noting from Remark \ref{remark on D}(3)  that $\delta-\delta'$ is a $B$-linear homomorphism,
we see that $\delta-\delta'=0$ by virtue of $(1)$. 
\end{proof}

\begin{lem}\label{ED-prop} 
The following assertions hold.
\begin{enumerate}
\item[$(1)$]  If  $f \in \mathcal{E}$, then $j(f) \in \mathcal{E}$.
\item[$(2)$]   If  $\delta \in \mathcal{D}$ then $j(\delta) \in \mathcal{E}$.
\item[$(3)$]  Let  $\delta \in \mathcal{D}$. Then the constant term $\delta _0$ of the expansion of $\delta |_M$ is an $A$-derivation on $M$. 
\end{enumerate} 
As a consequence, the $j$-operator defines a mapping  $\mathcal{E} \cup \mathcal{D} \to \mathcal{E}$.  
\end{lem}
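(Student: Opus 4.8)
The plan is to verify each of the three assertions by direct computation using the explicit formulas for the $j$-operator and for $B$-derivations already established, and then assemble them into the final consequence. For part (1), suppose $f \in \mathcal{E}$ has degree $r$. I would take an arbitrary homogeneous $b \in B$ and $n \in N$, write $n = \sum_{i \ge 0} X^{(i)} m_i$ along (\ref{decomposition of N}), and reduce by $R$-bilinearity to the case $b = X^{(k)}$. Using formula (\ref{jf}) together with the divided-power multiplication rule $X^{(k)}X^{(n+i)} = \binom{k+n+i}{k} X^{(k+n+i)}$ and the binomial identity $\binom{n+i}{i}\binom{k+n+i}{k} = \binom{k+n+i}{i}\binom{k+n}{k}$, one checks that $j(f)(X^{(k)} n) = X^{(k)} j(f)(n)$; since $|X^{(k)}|$ is even and $|j(f)| = r - |X|$, the sign $(-1)^{|X^{(k)}||j(f)|}$ is $+1$, so this is exactly the $B$-linearity condition. (Here I rely on Theorem and Definition \ref{j-operator}, which already records the formula for $j(f)$ on all of $N$.)

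For part (2), let $\delta \in \mathcal{D}$, so $|\delta| = -1$ and $\delta$ satisfies the derivation rule; by Remark \ref{remark on D}(2) its action is $\delta(\sum_i X^{(i)} m_i) = \sum_i X^{(i)}\{ t m_{i+1} + \delta_0(m_i)\}$, where $\delta_0 = \delta|_M$ composed with projection to $M$ is the constant term. Although $\delta$ is not defined via an expansion of the form (\ref{expansion}) in the naive sense (it is not $A$-linear), the $j$-operator is still defined on it through the expansion of $\delta|_M$: writing $\delta|_M = \sum_{i \ge 0} X^{(i)} \delta_i$ with $\delta_1 = t \cdot(-)$ regarded as an element of $\Hom_R(M,M)_{-1}$ and $\delta_i = 0$ for $i \ge 2$ (this reads off from the displayed formula, since the only nonconstant contribution is the $t m_{i+1}$ term), one gets $j(\delta)(X^{(n)}m) = X^{(n)} \sum_{i \ge 0} X^{(i)}\binom{n+i}{i}\delta_{i+1}(m) = X^{(n)} X^{(0)}\binom{n}{0} t m = X^{(n)} t m$ — wait, more carefully, $\delta_{i+1}$ is nonzero only for $i = 0$, giving $\delta_1 = t\cdot(-)$, so $j(\delta)(X^{(n)}m) = \binom{n}{0}X^{(n)} t m = X^{(n)} t m$, and one must then re-expand $X^{(n)} t m$ along (\ref{decomposition of N}) — since $t \in A$, $X^{(n)} t m = X^{(n)}(tm)$ already lies in $X^{(n)} M$. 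So $j(\delta)$ is the $B$-linear endomorphism of degree $-1-|X|$ sending $X^{(n)} m \mapsto X^{(n)}(tm)$; that it is $B$-linear follows by the same divided-power bookkeeping as in part (1), or alternatively by observing $j(\delta) = \widetilde{t \cdot (-)}$ is the $B$-linear extension of multiplication by $t$ in the sense of Lemma \ref{extension}. Either way $j(\delta) \in \mathcal{E}$.

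For part (3), I would simply apply the derivation identity $\delta(am) = d^A(a) m + (-1)^{|a|} a \delta(m)$ for $a \in A$, $m \in M$ — which holds because it is a special case of the $B$-derivation property restricted to $A \subseteq B$ (and $d^B|_A = d^A$) — and project both sides to the $M = X^{(0)}M$ component of (\ref{decomposition of N}); since $a m \in M$ already and $d^A(a) \in A$, the projection of $\delta(am)$ is $\delta_0(am)$ and the right-hand side projects to $d^A(a) m + (-1)^{|a|} a \delta_0(m)$, giving the $A$-derivation property for $\delta_0$. Finally, the consequence: parts (1) and (2) together say $j$ sends $\mathcal{E}$ into $\mathcal{E}$ and $\mathcal{D}$ into $\mathcal{E}$, hence the restriction of the $j$-operator to $\mathcal{E} \cup \mathcal{D}$ lands in $\mathcal{E}$. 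The main obstacle I anticipate is purely combinatorial: getting the binomial-coefficient identity and the signs (from the Koszul sign rule, even though $|X|$ even kills most of them) exactly right in part (1), since everything downstream depends on $j$ genuinely respecting $B$-linearity; the conceptual content of parts (2) and (3) is light by comparison.
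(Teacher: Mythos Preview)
Your plan for part~(1) has a gap: reducing to $b = X^{(k)}$ checks only that $j(f)$ commutes with the divided powers, not that it is $A$-linear. A homogeneous $b \in B$ has the form $\sum_k X^{(k)} a_k$ with $a_k \in A$, so $B$-linearity requires both. In fact the $X$-commutativity of $j(f)$ is built into the very definition~(\ref{jf}) (no binomial identity is needed); the substantive step is $A$-linearity. The paper handles this by expanding $f(am) = (-1)^{|a||f|}a f(m)$ for $a \in A$, $m \in M$, comparing term by term with $f|_M = \sum_i X^{(i)} f_i$, and concluding that each $f_i$ is $A$-linear; then $j(f)|_M = \sum_i X^{(i)} f_{i+1}$ is $A$-linear, and together with $X$-commutativity one gets $j(f) \in \mathcal{E}$.

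Your part~(2) contains a genuine error. You claim the expansion $\delta|_M = \sum_i X^{(i)} \delta_i$ has $\delta_1 = t\cdot(-)$ and $\delta_i = 0$ for $i \ge 2$, reading this from Remark~\ref{remark on D}(2). But that remark says $\delta\big(\sum_i X^{(i)} m_i\big) = \sum_i X^{(i)}\{t m_{i+1} + \delta(m_i)\}$ with $\delta(m_i)$ lying in $N$, \emph{not} in $M$; you have silently replaced $\delta(m_i)$ by its constant term $\delta_0(m_i)$. The expansion $\delta|_M = \sum_j X^{(j)} \delta_j$ records how $\delta(m) \in N$ decomposes along~(\ref{decomposition of N}) for $m \in M$, and there is no reason the higher $\delta_j$ vanish. (Note also the degree mismatch: $|\delta_1| = -1 - |X|$, whereas multiplication by $t$ has degree $|t| = |X| - 1$.) Your conclusion $j(\delta) = \widetilde{t\cdot(-)}$ is therefore wrong in general; indeed the whole point of $\Delta_N = j(\partial^N)$ later in the paper is that these higher terms carry the obstruction to lifting. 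The paper instead argues parallel to part~(1): expanding the derivation identity $\delta(am) = d^A(a)m + (-1)^{|a|}a\,\delta(m)$ for $a \in A$, $m \in M$ and comparing coefficients shows that $\delta_0$ is an $A$-derivation (this is (3)) and that each $\delta_i$ for $i \ge 1$ is $A$-linear, whence $j(\delta)|_M = \sum_i X^{(i)} \delta_{i+1}$ is $A$-linear and $j(\delta) \in \mathcal{E}$.

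Your part~(3) is fine and agrees with the paper.
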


\begin{proof}
$(1)$ 
Write $f|_M=\sum_{i \ge 0}X^{(i)}f_i$ as in (\ref{expansion}).
Since $f$ is  $B$-linear and noting that  $|f| \equiv |f_i|(\rm{mod} \ 2)$, we see that  
$$f(am)=(-1)^{|a||f|}af(m)=(-1)^{|a||f|}a\sum_{i \ge 0}X^{(i)}f_i(m)=\sum_{i\ge 0}X^{(i)}(-1)^{|a||f_i|}af_i(m)$$
for $a \in A$ and $m\in M$. 
Thus by the uniqueness of expansion it is easy to see that 
$$f_i(am)=(-1)^{|a||f_i|}af_i(m).$$
Namely each  $f_i$ is $A$-linear, and therefore $j(f)|_M=\sum_{i \ge 0}X^{(i)}f_{i+1}$ is $A$-linear as well. 
Meanwhile,  it follows from the definition of  $j(f)$ or (\ref{jf}) that $j(f)$ commutes with the action of $X$ on $N$. 
Thus $j(f)$ is $B$-linear, and $j(f)$ belongs to $\mathcal{E}$.

$(2), (3); $ 
Write $\delta|_M=\sum_{i\ge0}X^{(i)}\delta_i$.
Since $\delta$ is a $B$-derivation,  we have equalities;  
\begin{equation*}
\delta(am)=d^B(a)m+(-1)^{|a||\delta|}a\delta(m) 
 =\left\{ d^A(a)m+(-1)^{|a||\delta|}a\delta _0(m) \right\}+\sum_{i \ge 1}X^{(i)}(-1)^{|a||\delta|}a\delta _i(m), 
\end{equation*}
for $a \in A$ and $m\in M$. 
On the other hand, $\delta(am)=\sum_{i \ge 0}X^{(i)}\delta_i(am).$
Comparing these equalities and noting that  $|\delta| \equiv |\delta _i|(\rm{mod} \ 2)$  for all $i \geq 0$, we eventually have  
$$
\delta_0(am)=d^A(a)m+(-1)^{|a||\delta _0|}a\delta_0(m)
\ \ \text{ and }\ \ 
\delta_i(am)=(-1)^{|a||\delta _i|}a\delta_i(m) \ \  \text{for} \ \ i >0, 
$$
which implies the required results in (2) and (3). 
\end{proof}

\begin{prop}\label{j-prop}
The following equalities hold for $f, g \in \mathcal {E}$ and $\delta, \delta' \in \mathcal{D}$.
\begin{enumerate}
\item[$(1)$]  $j(fg)=j(f)g+fj(g)$.
\item[$(2)$]  $j(f\delta)|_M=j(f)\delta|_M+fj(\delta)|_M$.
\item[$(3)$]  $j(\delta f)|_M=j(\delta)f|_M+\delta j(f)|_M$.
\item[$(4)$]  $j(\delta\delta')|_M=j(\delta)\delta'|_M+\delta j(\delta')|_M$.
\end{enumerate}
\end{prop}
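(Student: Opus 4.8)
The plan is to check all four identities by comparing restrictions to $M$. In case $(1)$, all the maps involved lie in $\mathcal{E}$: we have $j(f),j(g)\in\mathcal{E}$ by Lemma~\ref{ED-prop}$(1)$, and $\mathcal{E}=\Hom_B(N,N)$ is closed under composition, so $fg$, $j(f)g$ and $fj(g)$ all belong to $\mathcal{E}$; hence by Lemma~\ref{AB-linear}$(1)$ it suffices to prove $j(fg)|_M=j(f)g|_M+fj(g)|_M$. The identities $(2)$--$(4)$ are already phrased as equalities of restrictions to $M$. So in every case the problem reduces to a direct computation of expansions along the decomposition (\ref{decomposition of N}).

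I would fix the expansions $f|_M=\sum_{i\ge0}X^{(i)}f_i$, $g|_M=\sum_{j\ge0}X^{(j)}g_j$, $\delta|_M=\sum_{i\ge0}X^{(i)}\delta_i$ and $\delta'|_M=\sum_{j\ge0}X^{(j)}\delta'_j$ as in (\ref{expansion}), and use two computational inputs. First, since $|X^{(i)}|=i|X|$ is even, the $B$-linearity of any $\varphi\in\mathcal{E}$ (and of $j(\varphi)$, which by Lemma~\ref{ED-prop} is again in $\mathcal{E}$ and moreover commutes with the $X$-action) lets one move $X^{(i)}$ to the left with no sign, after which the divided-power rule $X^{(i)}X^{(j)}=\binom{i+j}{i}X^{(i+j)}$ applies. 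Second, a $B$-derivation $\delta$ obeys $\delta(X^{(i)}y)=tX^{(i-1)}y+X^{(i)}\delta(y)$ for $y\in M$, the first summand coming from $d^B(X^{(i)})=tX^{(i-1)}$. Carrying these out, one finds: for $\varphi\in\mathcal{E}$ and $h\in\mathcal{E}\cup\mathcal{D}$ the $k$-th coefficient of $(\varphi h)|_M$ equals $\sum_{i+j=k}\binom{k}{i}\varphi_i h_j$, while the $k$-th coefficient of $(\delta h)|_M$ equals $t\,h_{k+1}+\sum_{i+j=k}\binom{k}{i}\delta_i h_j$, where $t\,h_{k+1}$ means left multiplication by $t$ composed with the $(k+1)$-st coefficient of $h$; moreover $(j(\varphi))_i=\varphi_{i+1}$, so passing to $j$ shifts coefficient indices up by one.

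Feeding these into the four identities, the "binomial part" in each case is the reindexing identity
\[
\sum_{i+j=k+1}\binom{k+1}{i}u_iv_j=\sum_{i+j=k}\binom{k}{i}u_{i+1}v_j+\sum_{i+j=k}\binom{k}{i}u_iv_{j+1},
\]
which is just Pascal's rule $\binom{k}{i-1}+\binom{k}{i}=\binom{k+1}{i}$. For $(1)$ and $(2)$ this is the whole computation, with $(u,v)=(f,g)$, resp.\ $(f,\delta)$. For $(3)$ and $(4)$ the extra $t$-terms must also be matched: on the left they occur inside $j(\delta f)|_M$, resp.\ $j(\delta\delta')|_M$, as $t\,f_{k+2}$, resp.\ $t\,\delta'_{k+2}$, and on the right they occur inside $\delta\,j(f)|_M$, resp.\ $\delta\,j(\delta')|_M$, with the same value, so they cancel and only the binomial identity survives. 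This establishes all four equalities.

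The routine but slightly delicate point --- really the only place where anything beyond bookkeeping happens --- is tracking this $t$-term produced by $d^B(X^{(i)})$ each time a derivation is pushed past a divided power, together with the index shifts caused by $j$, so that the stray terms on the two sides of $(3)$ and $(4)$ are seen to cancel. No sign subtleties intervene, since $j$ has even degree $-|X|$ and each $X^{(i)}$ has even degree.
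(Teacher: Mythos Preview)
Your argument is correct. The expansion formulas you state for $(\varphi h)|_M$ with $\varphi\in\mathcal{E}$ and for $(\delta h)|_M$ with $\delta\in\mathcal{D}$ are right, the index shift $(j(\psi))_k=\psi_{k+1}$ holds for any $\psi\in\Hom_R(N,N)$ by the definition of $j$, and the Pascal identity then closes each case exactly as you describe, with the $t$-terms in $(3)$ and $(4)$ matching as claimed.

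The paper takes a different route for $(2)$--$(4)$. Rather than computing the full expansion of $\delta h$ directly, it writes $\delta=\tilde{\delta_0}+(\delta-\tilde{\delta_0})$, where $\tilde{\delta_0}$ is the $B$-linear extension of the constant term $\delta_0$; since $\delta-\tilde{\delta_0}\in\mathcal{E}$ and $j(\tilde{\delta_0})=0$, one can apply the already-proved identity $(1)$ to the $\mathcal{E}$-piece and is left with a simpler residual computation involving only $\tilde{\delta_0}$ (whose expansion has a single nonzero coefficient). Your approach is more uniform and arguably more transparent, treating all four cases by the same binomial bookkeeping; the paper's approach trades a little structural overhead for shorter computations in the derivation cases. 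Both are perfectly valid.
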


Before proving Proposition \ref{j-prop}, 
we should remark that graded $R$-module homomorphisms $f\delta$, $\delta f$ and $\delta\delta'$ do not necessarily  belong to $\mathcal{E}$ or $\mathcal{D}$, and neither do $j(f\delta),j(\delta f)$ and $j(\delta\delta')$.
The equalities in $(2)$-$(4)$  hold only when they are restricted on $M$.

\begin{proof}
$(1)$ Note from Lemma \ref{ED-prop} that  $j(fg)$, $j(f)g$ and $fj(g)$ are elements of  $\mathcal{E}$.
By this reason, we have only to show that  $j(fg)|_M=j(f)g|_M+fj(g)|_M$ by Lemma \ref{AB-linear}.
Taking the expansions as  $f|_M = \sum _{i \ge 0} X^{(i)} f_i$ and 
$g|_M = \sum _{i \ge 0} X^{(i)} g_i$, we have the equalities: 
\begin{equation*}
fg|_M  = f(\sum_{i\ge 0}X^{(i)}g_i)=\sum_{i\ge 0}X^{(i)}fg_i=\sum_{i\ge 0}X^{(i)}\sum_{j\ge 0}X^{(j)}f_jg_i=\sum_{n\ge 0}X^{(n)}\sum_{i=0}^{n}\binom{n}{i}f_ig_{n-i}. 
\end{equation*}
Hence it follows from the definition of $j$-operator that 
\begin{equation*}
j(fg)|_M  = \sum_{n\ge 0}X^{(n)}\sum_{i=0}^{n+1}\binom{n+1}{i}f_ig_{n-i+1}.
\end{equation*}
On the other hand, we have equalities;  
\begin{equation*}
\begin{split}
j(f)g|_M+fj(g)|_M &=\sum_{k\ge 0}j(f)(X^{(k)}g_k)+\sum_{k\ge 0}f(X^{(k)}g_{k+1})\\
&=\sum_{k\ge0}\sum_{i\ge0} X^{(i+k)}\binom{i+k}{i}f_{i+1}g_k+\sum_{k\ge0}\sum_{i\ge0} X^{(i+k)}\binom{i+k}{i} f_i g_{k+1}\\
&=\sum_{n\ge0} X^{(n)} \left\{\sum_{i=1}^{n+1}  \binom{n}{i-1} f_ig_{n-i+1} + \sum_{i=0}^{n}\binom{n}{i} f_ig_{n-i+1}\right\}.
\end{split}
\end{equation*}
Since $\binom{n+1}{i}=\binom{n}{i-1}+\binom{n}{i}$ for $0 <  i \le n$, we deduce that $j(fg)|_M=j(f)g|_M+fj(g)|_M$. 

$(2)$ 
Recall from the previous lemma that the constant term $\delta_0$ in the expansion $\delta|_M=\sum_{i\ge0}X^{(i)}\delta_i$ is an $A$-derivation on $M$. 
Set  $\tilde{\delta_0}$ as the extended $B$-derivation of $\delta _0$ on $N$ defined by means of Lemma \ref{extension}. 
Then as we noted in Remark \ref{remark on D} (3),  $\delta-\tilde{\delta_0}$ is $B$-linear of degree $|\delta|=-1$. 
Moreover we see that $j(\delta-\tilde{\delta_0})=j(\delta)$, since $j(\tilde{\delta_0})=0$.
Thus it follows from the equality $(1)$ of the present lemma that 
$j((\delta-\tilde{\delta_0})f)=j(\delta-\tilde{\delta_0})f+(\delta-\tilde{\delta_0})j(f)=j(\delta)f+\delta j(f)-\tilde{\delta_0}j(f)$.
On the other hand, $j((\delta-\tilde{\delta_0})f)=j(\delta f)-j(\tilde{\delta_0}f)$.
Therefore we have that 
$$j(\delta f)-j(\tilde{\delta_0}f)=j(\delta)f+\delta j(f)-\tilde{\delta_0}j(f).$$
Hence it is enough to prove the equality in the case where $\delta=\tilde{\delta_0}$, that is, 
\begin{equation}\label{proving equality}
j(\tilde{\delta_0}f)|_M=\tilde{\delta_0}j(f)|_M.
\end{equation}
To prove this,  take the expansion as  $f|_M = \sum _{i \ge 0} X^{(i)} f_i$, and we get 
$$\tilde{\delta_0}f|_M  = \tilde{\delta_0}\left(\sum_{i\ge 0}X^{(i)}f_{i}\right) = \sum_{i\ge 0}X^{(i)}(tf_{i+1}+\delta_0f_i).$$
Then it follows that 
$$j(\tilde{\delta_0}f)|_M = \sum_{i\ge 0}X^{(i)}(tf_{i+2}+\delta_0f_{i+1}),$$
while  
\begin{equation*}
\tilde{\delta_0}j(f)|_M = \tilde{\delta_0}\left(\sum_{i\ge 0}X^{(i)}f_{i+1}\right) = \sum_{i\ge 0}X^{(i)}(tf_{i+2}+\delta_0f_{i+1}).
\end{equation*}
This proves (\ref{proving equality}). 

$(3)$  
Similarly to $(2)$, it is sufficient to prove the equality  
$j(f\tilde{\delta_0})|_M=j(f)\tilde{\delta_0}|_M$. 
If  $f|_M = \sum _{i \ge 0} X^{(i)} f_i$ is the expansion, then we  have 
$f\tilde{\delta_0}|_M=f\delta_0=\sum_{i\ge0}X^{(i)}f_{i}\delta_0$. 
Hence it follows from the definition of $j$-operator that  
$$j(f\tilde{\delta_0})|_M=\sum_{i\ge0}X^{(i)}f_{i+1}\delta_0=j(f)\delta_0=j(f)\tilde{\delta_0}|_M, $$
as desired. 

$(4)$ 
Let $\delta_0$ be the constant term of the expansion $\delta|_M=\sum_{i\ge0}X^{(i)}\delta_i$. 
As in the proof of (2) we take the extension $\tilde{\delta_0}$ of $\delta _0$, and hence it holds that  $\delta-\tilde{\delta_0}$ is $B$-linear, and that $j(\delta-\tilde{\delta_0})=j(\delta)$.
Now applying the equality proved in (2), we have that 
\begin{equation}\label{4-1}
j((\delta-\tilde{\delta_0})\delta' )|_M
 =j(\delta-\tilde{\delta_0})\delta' |_M+(\delta-\tilde{\delta_0})j(\delta' )|_M\\
 =j(\delta)\delta'|_M +\delta j(\delta')|_M-\tilde{\delta_0}j(\delta')|_M.
\end{equation}
In contrast, we have
\begin{equation}\label{4-2}
\begin{split}
j((\delta-\tilde{\delta_0})\delta')=j(\delta\delta')-j(\tilde{\delta_0}\delta').
\end{split}
\end{equation}
Combining these equalities, we obtain the equality:   
$$j(\delta\delta') |_M
=j(\delta)\delta'|_M+\delta j(\delta')|_M+j(\tilde{\delta_0}\delta')|_M-\tilde{\delta_0}j(\delta')|_M.
$$
Thus it is enough to prove the following equality: 
\begin{equation}\label{4-3}
j(\tilde{\delta_0}\delta') |_M=\tilde{\delta_0}j(\delta')|_M.
\end{equation}
To prove (\ref{4-3}) let  $\delta' |_M = \sum_{i\ge 0}X^{(i)}\delta'_i$ be the expansion, and we have that 
$$\tilde{\delta_0}\delta' |_M=\tilde{\delta_0}\sum_{i\ge 0}X^{(i)}\delta'_i=\sum_{i\ge 0}X^{(i)}(t\delta'_{i+1}+\delta_0\delta'_i), 
$$
therefore it follows 
$$j(\tilde{\delta_0}\delta')|_M=\sum_{i\ge 0}X^{(i)}(t\delta'_{i+2}+\delta_0\delta'_{i+1}).$$
On the other hand, we have
$$\tilde{\delta_0}j(\delta')|_M=\tilde{\delta_0}\sum_{i\ge 0}X^{(i)}\delta'_{i+1}=\sum_{i\ge 0}X^{(i)}(t\delta'_{i+2}+\delta_0\delta'_{i+1}).$$
It proves the equality  (\ref{4-3}). 
\end{proof}

\begin{cor}\label{j-cor}
Let $f \in \mathcal{E}$, and assume that $f$ is invertible in $\mathcal{E}$. 
Then we have equalities$:$
\begin{enumerate}
\item[$(1)$]  $j(f)f^{-1}+fj(f^{-1})=0.$
\item[$(2)$]  $j(f\delta f^{-1})=j(f)\delta f^{-1} + f j(\delta)f^{-1} + f \delta j(f^{-1})$.
\end{enumerate}
\end{cor}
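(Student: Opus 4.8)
The plan is to derive both identities from the Leibniz-type rules of Proposition \ref{j-prop}, together with the elementary observation that $j(\mathrm{id}_N)=0$. The latter is immediate from Theorem and Definition \ref{j-operator}: the expansion of $\mathrm{id}_N|_M$ is $\mathrm{id}_N|_M=X^{(0)}\,\mathrm{id}_M$, so all higher coefficients vanish and hence $j(\mathrm{id}_N)=0$. For $(1)$, since $f,f^{-1}\in\mathcal E$, Proposition \ref{j-prop}$(1)$ applies to the product $ff^{-1}=\mathrm{id}_N$ and gives
\[
0=j(\mathrm{id}_N)=j(ff^{-1})=j(f)f^{-1}+f\,j(f^{-1}),
\]
which is exactly $(1)$; no restriction on the degree of $f$ is needed here.

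For $(2)$ I would first note that $\psi:=f\delta f^{-1}$ lies in $\mathcal D$ — a direct check of the derivation property, where one uses that $|X|$ (equivalently $|f|$) is even, the case $|f|=0$ being the one used in the applications — so that $j(\psi)\in\mathcal E$ by Lemma \ref{ED-prop}$(2)$. Applying Proposition \ref{j-prop}$(3)$ to $\psi\in\mathcal D$ and $f\in\mathcal E$, and using $\psi f=f\delta$, gives $j(f\delta)|_M=j(\psi)f|_M+\psi\,j(f)|_M$; applying Proposition \ref{j-prop}$(2)$ to $f\in\mathcal E$ and $\delta\in\mathcal D$ gives $j(f\delta)|_M=j(f)\delta|_M+f\,j(\delta)|_M$. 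Comparing the two expressions for $j(f\delta)|_M$ and substituting $\psi=f\delta f^{-1}$ yields
\[
j(\psi)\,f|_M=j(f)\delta|_M+f\,j(\delta)|_M-f\delta f^{-1}j(f)|_M .
\]

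Now put $P:=j(f)\delta f^{-1}+f\,j(\delta)f^{-1}-f\delta f^{-1}j(f)f^{-1}$. By $(1)$ in the form $j(f^{-1})=-f^{-1}j(f)f^{-1}$, the right-hand side of $(2)$ is precisely $P$; moreover $(Pf)|_M$ equals the right-hand side of the last display, so $\bigl((j(\psi)-P)f\bigr)|_M=0$, i.e.\ $(j(\psi)-P)f$ vanishes on $M$. It then remains to upgrade this to the identity $j(\psi)=P$ on all of $N$, and this is the step that needs care: the individual summands of $P$ are neither $B$-linear nor $B$-derivations, so the identity has been proved only after restriction to $M$. Writing $u:=j(f)f^{-1}\in\mathcal E$ one checks $P=(u\psi-\psi u)+f\,j(\delta)f^{-1}$; the second summand lies in $\mathcal E$ since $j(\delta)\in\mathcal E$ by Lemma \ref{ED-prop}$(2)$, and a short computation (again using that $|X|$ is even) shows the commutator-type term $u\psi-\psi u$ is $B$-linear, even though neither $u\psi$ nor $\psi u$ is.

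Hence $P\in\mathcal E$, so $(j(\psi)-P)f\in\mathcal E$ as well; being zero on $M$ it is zero by Lemma \ref{AB-linear}$(1)$, and since $f$ is invertible in $\mathcal E$ we conclude $j(\psi)=P$, which is $(2)$. The genuine obstacle in this argument is exactly the verification that the specific combination $P$ is $B$-linear — once that is in hand, Lemma \ref{AB-linear}$(1)$ turns the restricted identity into the full one, and everything else is a routine reshuffling of the Leibniz rules of Proposition \ref{j-prop} and of identity $(1)$.
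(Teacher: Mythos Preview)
Your proof is correct and follows essentially the same strategy as the paper: show both sides of (2) are $B$-linear, then verify equality after restriction to $M$ via Lemma~\ref{AB-linear}. The only cosmetic difference is which pair of Leibniz rules you invoke for the restricted identity --- you compute $j(\psi f)|_M$ and $j(f\delta)|_M$ with $\psi=f\delta f^{-1}$, whereas the paper computes $j(f^{-1}\psi)|_M$ and $j(\delta f^{-1})|_M$; these are symmetric variants. Your commutator expression $u\psi-\psi u$ with $u=j(f)f^{-1}$ is exactly the paper's grouping $j(f)\delta f^{-1}+f\delta j(f^{-1})$ rewritten using part~(1), so the $B$-linearity checks coincide as well; note that both arguments tacitly require $|f|$ to be even so that $f\delta f^{-1}\in\mathcal D$, which you flag and which is the only case used downstream.
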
 

\begin{proof}
$(1)$
It follows from Proposition \ref{j-prop}(1) that 
$j(ff^{-1})=j(f)f^{-1}+fj(f^{-1})$. 
On the other hand,  it holds $j(ff^{-1})=j(\mathrm{id}_N)=0$, hence the equality (1) follows.

$(2)$
First of all we note that both $j(f)\delta f^{-1}+f\delta j(f^{-1})$ and $fj(\delta)f^{-1}$ are  $B$-linear. 
To verify this fact we remark that the following equalities hold: 
\begin{equation*}
\begin{split}
&(j(f)\delta f^{-1}+f\delta j(f^{-1}))(X^{(n)}m) 
= j(f)\delta (X^{(n)}f^{-1}(m))+f\delta(X^{(n)} j(f^{-1})(m)) \\
&= j(f)\{ tX^{(n-1)} f^{-1}(m) + X^{(n)} \delta f^{-1}(m) \} + f \{tX^{(n-1)}j(f^{-1})(m) + X^{(n)} \delta j(f^{-1})(m)\} \\
&= tX^{(n-1)} j(f)f^{-1}(m) + X^{(n)} j(f)\delta f^{-1}(m) + tX^{(n-1)} f j(f^{-1})(m) + X^{(n)} f \delta j(f^{-1})(m)\\
&= X^{(n)}(j(f)\delta f^{-1}+f\delta j(f^{-1}))(m), 
\end{split}
\end{equation*}
where the last equality holds because of (1).
On the other hand, since $f\delta f^{-1}$ belongs to $\mathcal{D}$,  $j(f\delta f^{-1})$ is $B$-linear as well. 
Therefore it is enough to prove the equality: 
$j(f\delta f^{-1})|_M=(j(f)\delta f^{-1}+fj(\delta)f^{-1}+f\delta j(f^{-1}))|_M$  by Lemma \ref{AB-linear}.  
From Proposition \ref{j-prop}(2), we get 
\begin{equation}\label{2}
j(f^{-1}(f\delta f^{-1}))|_M=j(f^{-1})(f\delta f^{-1})|_M+f^{-1}j(f\delta f^{-1})|_M.
\end{equation}
Meanwhile, Proposition \ref{j-prop}(3) implies that  
\begin{equation}\label{3}
j(f^{-1}(f\delta f^{-1}))|_M=j(\delta f^{-1})|_M=j(\delta)f^{-1}|_M+\delta j(f^{-1})|_M.
\end{equation}
Summarizing (\ref{2}) and (\ref{3}), we see that 
\begin{equation*}
\begin{split}
j(f\delta f^{-1})|_M 
&= -f j(f^{-1})(f\delta f^{-1})|_M + f j(\delta)f^{-1}|_M+f \delta j(f^{-1})|_M\\
&= j(f)\delta f^{-1}|_M + f j(\delta)f^{-1}|_M+f \delta j(f^{-1})|_M.
\end{split}
\end{equation*}
where the last equality holds by virtue of (1) in the present corollary. 
This completes the proof. 
\end{proof}

\section{main results}
Now we are able to prove the main theorems of this paper. 
See Theorem \ref{main} and Theorem \ref{unique} below. 

In the rest of the paper, $A$ always denotes a DG $R$-algebra and $B=A\langle X|dX=t\rangle$ is an extended DG algebra by the adjunction of variable $X$ that kills the  cycle $t \in A$, where  $|X|$  is a positive even integer.
Let  $N$  be a DG $B$-module and we always assume here that  $N$ is semi-free. 
We are interested in the conditions that sufficiently imply the liftability of $N$ to $A$. 
Since  $N^{\natu}$ is free as a $B\natu$-module, the condition (\ref{assumption}) is satisfied, that is,  
there is a graded $A\natu$-module $M$ such that $N\natu\cong B\natu\otimes_{A\natu}M$ as graded $B\natu$-modules.

The differential mapping $\d^N$ on $N$  belongs to $\mathcal{D}$ which, we recall, is the set of all $B$-derivations on $N$. 
It thus follows from Lemma \ref{ED-prop} that $j(\d^N)$ is $B$-linear, equivalently $j (\d ^N) \in  \mathcal{E}$. 
This specific element of  $\mathcal{E}$  will be a key object when we consider the lifting property of $N$ in the following argument. 
This is the reason why we make the following definition of  $\Delta _N$ as  

\begin{equation}\label{def of delta}
\Delta_N:=j(\d^N).
\end{equation}

Recall again from Lemma \ref{ED-prop} that $\Delta_N$ is a $B$-linear homomorphism on $N$ such that 
$| \Delta _N | = -|X|-1$ is an odd integer.

\begin{rem}
The exactly same definition was made by S. Nasseh and Y. Yoshino in the case where $|X|$ is odd. 
See \cite[Convention $2.5$]{NY}.
\end{rem}

As we see in the next lemma,  $\Delta_N$ defines an element of $\Ext^{|X|+1}_B(N,N)$, which will turn out to be an obstruction for the lifting of $N$ to $A$.

\begin{lem}\label{cyc}
It holds that  $\Delta _N \d^N = - \d ^N \Delta _N$. 
Hence  $\Delta_N$ is a cycle of degree $-|X|-1$ in $\mathcal{E} = \Hom_B(N,N)$.
\end{lem}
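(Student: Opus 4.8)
The plan is to compute $j$ of the equation $(\d^N)^2 = 0$ and use the Leibniz-type rule for the $j$-operator applied to a product of two derivations. Recall that $\d^N \in \mathcal{D}$, so $\d^N \d^N \in \mathcal{D}\cdot\mathcal{D}$, and Proposition \ref{j-prop}(4) gives the identity $j(\d^N\d^N)|_M = j(\d^N)\d^N|_M + \d^N j(\d^N)|_M$, i.e. $j(\d^N\d^N)|_M = \Delta_N \d^N|_M + \d^N \Delta_N|_M$ by the definition \eqref{def of delta}. Since $(\d^N)^2 = 0$ as a map on all of $N$, in particular $\d^N\d^N|_M = 0$, hence $j(\d^N\d^N)|_M = 0$. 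Combining these yields $(\Delta_N\d^N + \d^N\Delta_N)|_M = 0$.

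To upgrade this restricted equality to an equality of maps on all of $N$, I would invoke Lemma \ref{AB-linear}. For that I must check that $\Delta_N \d^N + \d^N \Delta_N$ lies in $\mathcal{E}$ — more precisely, that it is $B$-linear so that agreement on $M$ forces agreement everywhere. Here $\Delta_N \in \mathcal{E}$ and $\d^N \in \mathcal{D}$; the composite of a $B$-linear map with a $B$-derivation is in general neither in $\mathcal{E}$ nor in $\mathcal{D}$ (as the paper warns right after Proposition \ref{j-prop}), but the particular combination $\Delta_N\d^N + \d^N\Delta_N$ should turn out to be $B$-linear: one checks directly on $X^{(n)}m$ that the "derivation correction terms" produced by moving $\d^N$ past $X^{(n)}$ (which introduce $tX^{(n-1)}$) cancel between the two summands, exactly in the spirit of the computation carried out in the proof of Corollary \ref{j-cor}(2). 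Once $B$-linearity of $\Delta_N\d^N + \d^N\Delta_N$ is established, Lemma \ref{AB-linear}(1) upgrades $(\Delta_N\d^N + \d^N\Delta_N)|_M = 0$ to $\Delta_N\d^N + \d^N\Delta_N = 0$ on $N$, which is precisely $\Delta_N\d^N = -\d^N\Delta_N$.

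Finally, for the "hence" clause: the differential on $\mathcal{E} = \Hom_B(N,N)$ is $\d^{\mathcal{E}}(g) = \d^N g - (-1)^{|g|} g \d^N$. Since $|\Delta_N| = -|X|-1$ is odd, $(-1)^{|\Delta_N|} = -1$, so $\d^{\mathcal{E}}(\Delta_N) = \d^N\Delta_N + \Delta_N\d^N = 0$ by what we just proved. Thus $\Delta_N$ is a cycle of degree $-|X|-1$ in $\mathcal{E}$, and therefore determines a class in $H_{|X|+1}(\Hom_B(N,N))$; since $N$ is semi-free, $\Hom_B(N,N)$ computes $\Ext_B^*(N,N)$ and this class lives in $\Ext_B^{|X|+1}(N,N)$.

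I expect the only real obstacle to be the verification that $\Delta_N\d^N + \d^N\Delta_N$ is $B$-linear, since neither term individually is. This is a short but careful computation with the divided-power action: expanding both $\Delta_N\d^N(X^{(n)}m)$ and $\d^N\Delta_N(X^{(n)}m)$ using $\d^N(X^{(n)}m) = tX^{(n-1)}m + X^{(n)}\d^N(m)$ and the $B$-linearity of $\Delta_N$, the $tX^{(n-1)}$-terms must be shown to sum to zero — which again comes down to the relation $\Delta_N\d^N|_M + \d^N\Delta_N|_M = 0$ feeding back on itself, so the argument is genuinely self-consistent rather than circular once one tracks the two scales ($X^{(n)}$ versus $X^{(n-1)}$) separately.
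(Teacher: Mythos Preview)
Your proof is correct and follows exactly the paper's route: apply Proposition \ref{j-prop}(4) to $\d^N\d^N=0$ to obtain $(\Delta_N\d^N+\d^N\Delta_N)|_M=0$, check that $\Delta_N\d^N+\d^N\Delta_N$ is $B$-linear, and invoke Lemma \ref{AB-linear}. One clarification on the $B$-linearity step: the cancellation of the $tX^{(n-1)}$-terms is a pure sign computation (using that $|\Delta_N|$ and $|t|$ are odd while $|X^{(n)}|$ is even) and does \emph{not} require feeding the restricted identity $(\Delta_N\d^N+\d^N\Delta_N)|_M=0$ back in---so there is no circularity to worry about in the first place.
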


\begin{proof}
Noting that $\left( \d^N\right) ^2 =0$, we have from Proposition \ref{j-prop} that 
$0=j(\d^N\d^N)|_M=j(\d^N)\d^N|_M+\d^Nj(\d^N)|_M.$
On the other hand it is easily seen that $j(\d^N)\d^N+\d^Nj(\d^N)$ is $B$-linear. 
Hence it follows from Lemma \ref{AB-linear} that  $j(\d^N)\d^N+\d^Nj(\d^N)=0$. 
\end{proof}

In the proof of our main theorems, we shall need some argument on automorphisms on the DG $B$-module $N$. 
The following lemma is a preliminary for that purpose. 

\begin{lem}\label{conv-isom}
Let $\varphi:N\to N$ be a graded $B$-linear endomorphism of degree $0$. 
As before we assume that the expansion is given as $\varphi|_M=\sum_{i \ge 0}X^{(i)}\varphi_i$.
If  $\varphi$ is a $B$-linear automorphism on $N$, then the constant term $\varphi_0$ is an $A$-linear automorphism on $M$. 
\end{lem}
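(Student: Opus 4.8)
The plan is to reduce the statement about the automorphism $\varphi$ of $N$ to a statement about its constant term $\varphi_0$ by looking at what the inverse $\varphi^{-1}$ contributes. Since $\varphi$ is a $B$-linear automorphism, it has a $B$-linear inverse $\psi := \varphi^{-1} \in \mathcal{E}$ of degree $0$, and by Lemma \ref{AB-linear}(1) it suffices to detect the identity on $M$. Write the expansions $\varphi|_M = \sum_{i\ge0} X^{(i)}\varphi_i$ and $\psi|_M = \sum_{i\ge0} X^{(i)}\psi_i$ along the decomposition (\ref{decomposition of N}), where each $\varphi_i, \psi_i \in \Hom_A(M,M)_{-i|X|}$ by Lemma \ref{ED-prop}(1). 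The key computation is the product formula already recorded in the proof of Proposition \ref{j-prop}(1): composing two $B$-linear maps multiplies the expansions like formal divided-power series, so that $(\varphi\psi)|_M = \sum_{n\ge0} X^{(n)} \sum_{i=0}^{n}\binom{n}{i}\varphi_i\psi_{n-i}$.

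First I would apply this with $\varphi\psi = \mathrm{id}_N$ and also with $\psi\varphi = \mathrm{id}_N$. Since the expansion of $\mathrm{id}_N|_M$ is just $X^{(0)}\mathrm{id}_M$ (no higher terms), comparing the $X^{(0)}$-components on both sides gives $\varphi_0\psi_0 = \mathrm{id}_M$ and $\psi_0\varphi_0 = \mathrm{id}_M$. This exhibits $\psi_0$ as a two-sided $A$-linear inverse of $\varphi_0$, so $\varphi_0$ is an $A$-linear automorphism of $M$, which is exactly the claim. (That $\varphi_0$ is $A$-linear of degree $0$ is already guaranteed by Lemma \ref{ED-prop}(1) applied to $\varphi \in \mathcal{E}$, since the constant term of an expansion of a $B$-linear map is $A$-linear.)

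I do not expect a genuine obstacle here; the only point that needs care is making sure the product-of-expansions formula is legitimately available. It was derived inside the proof of Proposition \ref{j-prop}(1) for arbitrary $f,g \in \mathcal{E}$, so there is no circularity, and the binomial coefficients in the convolution do not interfere with reading off the lowest term $X^{(0)}$, where the sum has the single summand $\binom{0}{0}\varphi_0\psi_0$. An alternative, essentially equivalent route would be to invoke Lemma \ref{extension}: extend $\varphi_0^{?}$ — but this is circular unless one already knows $\varphi_0$ is invertible, so the direct comparison of constant terms of $\varphi\varphi^{-1}$ and $\varphi^{-1}\varphi$ is the cleanest argument and is the one I would write out.
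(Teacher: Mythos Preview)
Your proposal is correct and follows essentially the same approach as the paper: take the inverse $\psi=\varphi^{-1}$, expand $\psi|_M$, and compare the constant terms of $\varphi\psi|_M=\mathrm{id}_M$ and $\psi\varphi|_M=\mathrm{id}_M$ to obtain $\varphi_0\psi_0=\psi_0\varphi_0=\mathrm{id}_M$. The only difference is cosmetic: you cite the full convolution formula from the proof of Proposition~\ref{j-prop}(1), whereas the paper simply asserts that $\varphi_0\psi_0$ is the constant term of $\varphi\psi|_M$ without spelling out the higher terms.
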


\begin{proof}
Take a graded $B$-linear endomorphism $\psi$ such that $\varphi\psi=\mathrm{id}_N = \psi\varphi$.
Writing $\psi|_M=\sum_{n \ge 0}X^{(n)}\psi_n$ as well, we see that  $\varphi\psi|_M=\mathrm{id}_M$ implies that 
the constant term $\varphi_0\psi_0$ of $\varphi\psi|_M$ is equal to $\mathrm{id}_M$.
Similarly $\psi_0\varphi_0=\mathrm{id}_M$.
Therefore $\varphi_0$ is an $A$-linear automorphism on $M$.
\end{proof}

A DG module $L$, or more generally a graded module $L = \bigoplus _{i \in \Z} L_i$,  is said to be {\it bounded below} if  $L_{-i}=0$ for all suffieceintly large  integers $i$.  
A graded endomorphism $f$ on a graded module $L$ is said to be {\it locally nilpotent} if,  
for any $x\in L$,  there is an integer $n_x \ge 0$ such that $f^{n_x}(x)=0$, where $f^{n_x}$ denotes the $n_x$ times iterated composition of $f$. 

The converse of Lemma \ref{conv-isom}  holds in several cases. 
The following is one of such cases.  

\begin{lem}\label{isom}
Adding to the assumption (\ref{assumption}) we further assume that $N$ is bounded below. 
Let $\varphi : N  \to N$ be a graded $B$-linear endomorphism of degree $0$ with expansion $\varphi|_M=\sum_{i \ge0}X^{(i)}\varphi_i$.
Assume that the constant term $\varphi_0$ is an $A$-linear automorphism on $M$.
Then $\varphi$ is a $B$-linear automorphism on $N$. 
\end{lem}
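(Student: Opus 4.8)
The plan is to reduce the problem to inverting $\varphi_0$ and then correcting by a locally nilpotent perturbation. Since $\varphi_0$ is an $A$-linear automorphism of $M$, Lemma \ref{extension} produces a $B$-linear automorphism $\widetilde{\varphi_0}\in\mathcal E$ extending it (its inverse is $\widetilde{\varphi_0^{-1}}$, again by the uniqueness in Lemma \ref{extension}). It therefore suffices to show that $\psi:=\widetilde{\varphi_0^{-1}}\,\varphi$ is a $B$-linear automorphism of $N$, because then $\varphi=\widetilde{\varphi_0}\,\psi$ is a composite of two automorphisms. Now $\psi$ is $B$-linear of degree $0$, and its constant term is $\varphi_0^{-1}\varphi_0=\mathrm{id}_M$; equivalently, writing $\psi|_M=\sum_{i\ge 0}X^{(i)}\psi_i$, we have $\psi_0=\mathrm{id}_M$. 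So I may assume from the start that $\varphi_0=\mathrm{id}_M$, i.e. that $\varphi=\mathrm{id}_N+\eta$ where $\eta:=\varphi-\mathrm{id}_N$ is $B$-linear of degree $0$ with $\eta|_M=\sum_{i\ge 1}X^{(i)}\varphi_i$, so that $\eta$ has "no constant term".

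The key step is to prove that such an $\eta$ is locally nilpotent on $N$; once this is established, $\varphi=\mathrm{id}_N+\eta$ is invertible with inverse $\sum_{k\ge 0}(-\eta)^k$, the sum being finite on each element, and this inverse is $B$-linear since each $\eta^k$ is. To see local nilpotence I would use both hypotheses. First, because $\eta$ is $B$-linear and $\eta|_M$ lands in $\bigoplus_{i\ge 1}X^{(i)}M$, a computation analogous to (\ref{jf}) shows that $\eta$ raises the $X$-adic order: if $n\in X^{(p)}M+X^{(p+1)}M+\cdots$ then $\eta(n)\in X^{(p+1)}M+X^{(p+2)}M+\cdots$. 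Second, since $N$ is bounded below as a graded $R$-module, say $N_{-i}=0$ for $i>c$, and since $X^{(i)}M$ contributes to $N$ only in degrees $\ge i|X|+(\text{bottom degree of }M)$, for each fixed homogeneous element $n\in N_\ell$ only finitely many of the summands $X^{(i)}M$ meet degree $\ell$; more precisely $X^{(i)}M_k$ with $k+i|X|=\ell$ forces $k\ge -c$ hence $i\le (\ell+c)/|X|$. Thus a homogeneous element $n\in N_\ell$ lies in $\sum_{i\le i_0}X^{(i)}M$ for $i_0=\lfloor(\ell+c)/|X|\rfloor$, and after at most $i_0+1$ applications of $\eta$ its $X$-adic order exceeds $i_0$, forcing $\eta^{i_0+1}(n)=0$. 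Since every element of $N$ is a finite sum of homogeneous ones, $\eta$ is locally nilpotent, as required.

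I expect the main obstacle to be the bookkeeping in the second step: one must carefully combine the $B$-linearity of $\eta$ (which controls how $\eta$ acts on $X^{(i)}M$ via the divided-power multiplication, exactly as in (\ref{jf})) with the boundedness of $N$ to get a uniform-on-each-element bound on the nilpotency index, and to check that the "no constant term" property of $\eta$ genuinely translates into the order-raising statement for arbitrary elements of $N$ and not merely for elements of $M$. The rest—reducing to $\varphi_0=\mathrm{id}_M$ via Lemma \ref{extension}, and concluding invertibility from local nilpotence by the geometric series—is routine, and the $B$-linearity of the constructed inverse is immediate since it is a (locally finite) sum of $B$-linear maps.
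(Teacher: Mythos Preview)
Your proposal is correct and follows essentially the same approach as the paper: reduce to $\varphi_0=\mathrm{id}_M$ by composing with $B\otimes_A\varphi_0^{-1}$, then show that $\eta=\varphi-\mathrm{id}_N$ is locally nilpotent (using that it raises the $X$-filtration together with boundedness below) and invert via the geometric series. The only cosmetic difference is that the paper phrases the filtration step as $f^n(M)\subseteq\bigoplus_{i\ge n}X^{(i)}M$ by induction on $n$, while you phrase it as $\eta(F^pN)\subseteq F^{p+1}N$ for the filtration $F^pN=\bigoplus_{i\ge p}X^{(i)}M$; these are equivalent and yield the same conclusion.
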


\begin{proof}
Note that $\varphi$ is an automorphism  if and only if so is $(B\otimes_A \varphi_0^{-1})\varphi$.
Hence we may assume $\varphi_0=\mathrm{id}_M$.
Setting  $f = \varphi-\mathrm{id}_N$, we are going to prove that $f$ is locally nilpotent.
For this we note that 
$f(M) \subseteq \bigoplus _{i \ge 1} X^{(i)}M$. 
Then, since  $f$  is $B$-linear,  we can show by induction on $n > 0$ that 
$$
f^n(M)  \subseteq  \bigoplus _{i \ge n} X^{(i)}M.  
$$
Since  $f$ has degree $0$ as well as $\varphi$,  
the graded piece  $M_r$  of $M$ of degree $r$ is mapped by $f^n$ into 
$$
\left( \bigoplus _{i \ge n} X^{(i)}M \right)_r = \bigoplus _{i \ge n} X^{(i)} M _{r - i|X|}.    
$$ 
Since  $M \subseteq N$ is a graded $A$-submodule, we remark that $M$ is also bounded below. 
For a given integer $r$, we can take an integer  $n$ that is large enough so that  $M_{r - i|X|} =0$  for all $i \ge n$, 
since $|X| > 0$. 
We thus have from the above that $f^n(M_r) = 0$.  
This shows that $f$ is locally nilpotent as desired.

Then  $\sum _{i=0}^{\infty} (-1)^i f^i =\mathrm{id}_N-f+f^2-f^3+\cdots+(-1)^nf^n+\cdots$ is a well-defined $B$-linear homomorphism on $N$, and in fact it is an inverse of $\varphi=\mathrm{id}_N+f$.
\end{proof}

The following is a key to prove one of the main theorems.

\begin{prop}\label{keylem}
Let $f$ be a graded $B$-linear endomorphism of degree $-|X|$ on $N$ 
and $g_0$ be a graded $A$-linear homomorphism of degree $0$ on $M$.
Then there is a graded $B$-linear endomorphism $g$ of degree $0$ on $N$  satisfying   that 
$$
j(g)=gf \text{ and  } g_0  \text{ is the constant term of  } g.
$$
\end{prop}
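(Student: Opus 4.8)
The plan is to construct $g$ as a formal sum $g = \sum_{i\ge 0} X^{(i)} g_i$, where the $g_i \colon M \to M$ are $A$-linear homomorphisms determined recursively so that the two required conditions hold. By Lemma \ref{AB-linear}(1), giving $g$ as a $B$-linear endomorphism of $N$ is the same as giving the expansion $g|_M = \sum_{i\ge 0} X^{(i)} g_i$, so it suffices to produce a sequence $(g_i)_{i\ge 0}$ of $A$-linear maps with $g_0$ the prescribed one and with the identity $j(g) = gf$ holding after restriction to $M$ (again by Lemma \ref{AB-linear}(1), since both sides lie in $\mathcal{E}$ by Lemma \ref{ED-prop}(1) and the hypothesis on $f$). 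Writing out both sides on $M$ using formula (\ref{jf}) for the $j$-operator, the equation $j(g)|_M = gf|_M$ becomes, degree-component by degree-component along the decomposition (\ref{decomposition of N}), a system of equations of the form
\begin{equation*}
g_{n+1} = (\text{expression involving } g_0, g_1, \dots, g_n \text{ and } f)
\end{equation*}
after expanding $f|_M = \sum_{i\ge 0} X^{(i)} f_i$ and collecting terms in each $X^{(n)}$. This is exactly the shape that lets us define $g_{n+1}$ recursively once $g_0, \dots, g_n$ are known.

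First I would fix the expansions $f|_M = \sum_{i\ge 0} X^{(i)} f_i$ with $f_i \in \Hom_A(M,M)_{-|X|-i|X|}$ — note $f_0$ has degree $-|X|$ and $f_i$ has degree $-(i+1)|X|$ — and unknowns $g_i \in \Hom_A(M,M)_{-i|X|}$. Next I would compute the $X^{(n)}$-component of $gf|_M$: since $gf(m) = g(\sum_i X^{(i)} f_i(m)) = \sum_i X^{(i)} g(f_i(m)) = \sum_{i,k} X^{(i+k)}\binom{i+k}{i} g_k f_i(m)$, the coefficient of $X^{(n)}$ is $\sum_{i+k=n}\binom{n}{k} g_k f_i$. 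On the other side, by (\ref{jf}), the $X^{(n)}$-component of $j(g)|_M$ is $\binom{n+1}{1}\,g_{n+1}$ together with contributions that also involve only $g_{n+1}, g_{n+2}, \dots$? — here I must be careful: from (\ref{jf}), $j(g)(m) = \sum_{i\ge 0} X^{(i)} g_{i+1}(m)$, so the $X^{(n)}$-component of $j(g)|_M$ is simply $g_{n+1}$. Thus the equation to solve is
\begin{equation*}
g_{n+1} = \sum_{i=0}^{n} \binom{n}{i}\, g_{n-i}\, f_i \qquad (n \ge 0),
\end{equation*}
which manifestly defines $g_{n+1}$ in terms of $g_0, \dots, g_n$ and the fixed data $f_i$, starting from the prescribed $g_0$. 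Each $g_{n+1}$ so defined is $A$-linear (composites and $R$-linear combinations of $A$-linear maps are $A$-linear, with the sign conventions absorbed since everything is being read off a genuine identity of expansions) and has the correct degree $-(n+1)|X|$, as a degree count on the right-hand side confirms.

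Having defined all $g_i$, I set $g := \sum_{i\ge 0} X^{(i)} g_i$ as a $B$-linear endomorphism of $N$ — note this is well-defined because for each fixed $m \in M$ and each degree, only finitely many terms contribute when evaluating $g$ on $N$, since $N$ lives in the bounded-below range implicitly carried by the construction (more precisely, $g$ is defined by $g(X^{(p)} m) = \sum_{i\ge 0} X^{(p+i)}\binom{p+i}{i} g_i(m)$, and the constancy of the total degree forces all but finitely many $g_i(m)$ to vanish on a given homogeneous $m$ only if $M$ is bounded below; if the statement is meant without that hypothesis one should instead note that $g$ is defined formally and each evaluation lands in a product rather than a sum, or simply record that $g|_M$ as an element of $\Hom_R(M,N)_0$ is well-defined and extend $B$-linearly). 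By construction the constant term of $g|_M$ is $g_0$, and the recursion is precisely the restriction to $M$ of the identity $j(g) = gf$; since both $j(g)$ and $gf$ are $B$-linear, Lemma \ref{AB-linear}(1) upgrades this to the equality $j(g) = gf$ on all of $N$.

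The main obstacle I anticipate is the bookkeeping around well-definedness of the infinite sum $g = \sum_i X^{(i)} g_i$ as an actual endomorphism of $N$ (as opposed to a formal object): one needs either a finiteness/boundedness input or a direct-product formulation, and the precise hypotheses under which this proposition is invoked in the lifting theorem will dictate which. The algebraic heart — deriving the recursion $g_{n+1} = \sum_{i=0}^n \binom{n}{i} g_{n-i} f_i$ from $j(g) = gf$ — is a direct but sign- and binomial-coefficient-sensitive computation using (\ref{jf}) and the multiplication rule $X^{(i)} X^{(j)} = \binom{i+j}{i} X^{(i+j)}$, and should be carried out carefully but presents no conceptual difficulty.
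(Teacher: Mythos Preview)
Your approach is essentially identical to the paper's: both derive the recursion $g_{n+1} = \sum_{i=0}^n \binom{n}{i} g_i f_{n-i}$ by comparing the $X^{(n)}$-components of $j(g)|_M$ and $gf|_M$, then extend $g|_M$ $B$-linearly to all of $N$. Your concern about the well-definedness of the infinite sum $\sum_i X^{(i)} g_i(m)$ is legitimate, but the paper glosses over it as well; in practice the proposition is only invoked in Theorem~\ref{main}, where the bounded-below hypothesis is in force and forces $g_i(m)=0$ for $i\gg 0$.
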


\begin{proof}
Take the expansion of $f$ as 
$
f|_M=\sum_{n\ge0}X^{(n)}f_n. 
$
Note here that each $f_n$ is a graded $A$-linear endomorphism on $M$ of degree $-|X|(n+1)$ for $n\ge0$. 
Setting $g|_M=\sum_{n\ge0}X^{(n)}g_n$, we want to determine each $g_n$ so that $g$ satisfies the desired conditions.

Recall that $j(g)|_M=\sum_{n\ge0}X^{(n)}g_{n+1}$  and that $gf|_M=\sum_{n\ge0}X^{(n)}\sum_{0\le i \le n}\binom{n}{i} g_i f_{n-i}$.
Comparing these equalities, we obtain the following equations for $g_n \ (n \ge 0)$ to satisfy the required conditions; 
$$
g_{n+1}=\sum_{i=0}^{n}\binom{n}{i}g_i f_{n-i} \text{ for all } n \ge 0.
$$
Starting from $g_0$ and using these equalities,  we can determine the graded $A$-linear homomorphism $g_n$ by the induction on $n \ge 0$. 
Thus define $g$ as a  linear extension of $g|_M$  to $N$, that is,  $g=B\otimes_{A}g|_M$. 
This is a $B$-linear endomorphism on $N$ of degree $-|X|$, and satisfies all the desired conditions.
\end{proof}

\begin{lem}\label{eq-lem}
Suppose that  $\Delta_{N}=0$ as an element of  $\mathcal{E}$. 
Then the graded $A$-module $M$  has structure of DG $A$-module and 
$N = B \otimes _A M$ holds as an equality of DG $B$-modules. 
\end{lem}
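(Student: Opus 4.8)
The goal is to show that the vanishing of $\Delta_N=j(\d^N)$ forces the differential $\d^N$ to restrict to a differential on $M$, and that the resulting DG $A$-module is a lifting of $N$ in the strong sense (equality, not just isomorphism). The plan is to expand $\d^N|_M$ along the decomposition (\ref{decomposition of N}) as $\d^N|_M=\sum_{i\ge0}X^{(i)}\partial_i$, where by Lemma \ref{ED-prop}(3) the constant term $\partial_0$ is an $A$-derivation on $M$. By definition of the $j$-operator, $j(\d^N)|_M=\sum_{i\ge0}X^{(i)}\partial_{i+1}$, so the hypothesis $\Delta_N=0$, combined with Lemma \ref{AB-linear}, gives $\partial_i=0$ for all $i\ge1$. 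Hence $\d^N|_M=\partial_0$ maps $M$ into $M$, and we set $\d^M:=\partial_0$.

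Next I would check that $(M,\d^M)$ is genuinely a DG $A$-module. The derivation property of $\d^M=\partial_0$ over $A$ is exactly the content of Lemma \ref{ED-prop}(3). For the square-zero condition, note that $\d^N\circ\d^N=0$ on all of $N$, and in particular on $M$; since $\d^N$ preserves $M$ (as just shown), $(\d^M)^2=(\d^N|_M)^2=(\d^N)^2|_M=0$. So $M$ is a DG $A$-module.

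Finally I would identify $N$ with $B\otimes_A M$ as DG $B$-modules. We already have $N\natu=B\natu\otimes_{A\natu}M$ by assumption (\ref{assumption}), so the two sides agree as graded $B\natu$-modules via the identity map; it remains to see that the differentials coincide. The differential on $B\otimes_A M$ sends $X^{(i)}\otimes m$ to $d^B(X^{(i)})m+(-1)^{|X^{(i)}|}X^{(i)}\d^M(m)=tX^{(i-1)}m+X^{(i)}\d^M(m)$ (using that $|X|$ is even, so the sign is $+$). On the other hand, $\d^N$ is a $B$-derivation, so $\d^N(X^{(i)}m)=d^B(X^{(i)})m+(-1)^{|X^{(i)}|}X^{(i)}\d^N(m)=tX^{(i-1)}m+X^{(i)}\partial_0(m)$, using $\d^N|_M=\partial_0=\d^M$. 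These two expressions agree, and since both differentials are determined by their values on the generators $X^{(i)}\otimes m$ (equivalently, by Remark \ref{remark on D}(2)), they are equal. Hence $N=B\otimes_A M$ as DG $B$-modules.

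I do not expect a serious obstacle here: the lemma is essentially an unwinding of the definition of $\Delta_N$ together with the bookkeeping already established in Section 3. The only point requiring a little care is the sign in the differential on $B\otimes_A M$ — but since $|X^{(i)}|=i|X|$ is even, all the potentially troublesome signs are trivial, which is precisely why the even-degree case behaves so cleanly.
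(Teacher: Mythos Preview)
Your proof is correct and follows essentially the same approach as the paper: expand $\d^N|_M$, use $\Delta_N=j(\d^N)=0$ to kill the higher coefficients, and then identify $\d^N$ with the $B$-linear extension $B\otimes_A\partial_0$. The paper's version is terser (it simply invokes Lemma~\ref{extension} for the last step rather than computing on $X^{(i)}m$), and note that the vanishing $\partial_i=0$ for $i\ge 1$ comes directly from the uniqueness of the expansion rather than from Lemma~\ref{AB-linear}.
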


\begin{proof}
In the expansion  $\d^N |_M = \bigoplus _{i \ge 0} X^{(i)} \alpha _i$,   that $\Delta_{N}=0$ implies that 
$\alpha _i =0$ for $i >0$. 
Therefore  $\d^N |_M = \alpha _0$ is an $A$-derivation on $M$ and $(M, \alpha _0)$  defines a DG $A$-module. 
Moreover we have  $\d^N = B \otimes _A \alpha _0$ that equals $\tilde{\alpha}$ in the notation of Lemma \ref{extension}. 
Thus $N = B\otimes_A M$ as DG $B$-modules. 
\end{proof}

Now we are ready to prove the main theorem.
Note from Lemma \ref{cyc} that  $\Delta_N$ defines a cohomology class in $\Ext^{|X|+1}_B(N,N)$,  which we denote  by $[\Delta_N]$.
As we show in the following theorem the class $[\Delta _N]$  gives a precise obstruction for $N$ to to be liftable.

\begin{thm}\label{main}
As before let $N$ be a semi-free DG $B$-module, and assume that $N$ is bounded below.
Then $[\Delta_N]=0$ as an element of $\Ext^{|X|+1}_B(N,N)$ if and only if $N$ is liftable to $A$.
\end{thm}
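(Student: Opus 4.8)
The plan is to prove both implications separately. The easy direction is ``liftable $\Rightarrow [\Delta_N]=0$''. If $N$ is liftable, there is a DG $A$-module $M'$ with a DG $B$-isomorphism $\varphi : B\otimes_A M' \xrightarrow{\sim} N$. Since $\mathrm{d}^{B\otimes_A M'} = B\otimes_A \mathrm{d}^{M'}$ is the extension $\tilde{\alpha}$ of an $A$-derivation in the sense of Lemma \ref{extension}, we have $j(\mathrm{d}^{B\otimes_A M'}) = 0$. Transporting along $\varphi$, the differential of $N$ is $\mathrm{d}^N = \varphi\,\mathrm{d}^{B\otimes_A M'}\varphi^{-1}$, and Corollary \ref{j-cor}(2) gives $\Delta_N = j(\mathrm{d}^N) = j(\varphi)\,\mathrm{d}^{B\otimes_A M'}\varphi^{-1} + \varphi\, j(\mathrm{d}^{B\otimes_A M'})\varphi^{-1} + \varphi\,\mathrm{d}^{B\otimes_A M'} j(\varphi^{-1})$; the middle term vanishes, and using Corollary \ref{j-cor}(1) together with the fact that $\mathrm{d}^{B\otimes_A M'}$ is a $B$-derivation one checks the remaining two terms combine into $\pm(\mathrm{d}^N\,h + h\,\mathrm{d}^N)$ for an explicit $B$-linear $h$ of degree $-|X|$ (essentially $h = \varphi j(\varphi^{-1})$), so $\Delta_N$ is a boundary in $\mathcal{E}$, i.e. $[\Delta_N]=0$.

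\textbf{The converse.} Assume $[\Delta_N]=0$, so $\Delta_N = \mathrm{d}^N h + h\,\mathrm{d}^N$ for some $B$-linear $h$ on $N$ of degree $-|X|$. The goal is to produce a $B$-linear automorphism $\varphi$ of degree $0$ on $N$ such that the conjugated differential $\varphi^{-1}\mathrm{d}^N\varphi$ has $j$-operator equal to zero; then Lemma \ref{eq-lem} applied to $(N,\varphi^{-1}\mathrm{d}^N\varphi)$ — which is DG-isomorphic to $N$ via $\varphi$ — shows $N\cong B\otimes_A M$ as DG $B$-modules, giving the lifting. The natural candidate is built from $h$: one expects $\varphi$ of the form $g$ produced by Proposition \ref{keylem} applied with $f = -h$ (or a variant of it) and $g_0 = \mathrm{id}_M$, so that $j(\varphi) = -\varphi h$, which by Corollary \ref{j-cor}(1) is exactly the relation needed to kill the $j$-operator of the conjugated differential. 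By Lemma \ref{isom}, since $g_0 = \mathrm{id}_M$ is an automorphism and $N$ is bounded below, such a $g$ is automatically a $B$-linear automorphism. The computation to verify then reduces, via Corollary \ref{j-cor}(2), to showing $j(\varphi^{-1}\mathrm{d}^N\varphi) = \varphi^{-1}\bigl(j(\mathrm{d}^N) - \mathrm{d}^N\varphi j(\varphi^{-1}) - j(\varphi)\varphi^{-1}\mathrm{d}^N\bigr)\varphi$ restricted to $M$, and plugging in $j(\varphi) = -\varphi h$, $j(\varphi^{-1}) = h\varphi^{-1}$ (from Corollary \ref{j-cor}(1)) together with $\Delta_N = \mathrm{d}^N h + h\,\mathrm{d}^N$ should make everything cancel.

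\textbf{Main obstacle.} The delicate point is the bookkeeping in the converse: one must choose the right $f$ in Proposition \ref{keylem} (there is a sign/ordering ambiguity between $h$, $-h$, left versus right conjugation) so that the relation $j(\varphi) = \varphi f$ matches the coboundary relation $\Delta_N = \mathrm{d}^N h + h\,\mathrm{d}^N$ after conjugation, and one must confirm that the resulting $\varphi$ really does send $j(\mathrm{d}^N)$ to $0$ and not merely to another coboundary. All the $j$-operator identities needed are available (Proposition \ref{j-prop}, Corollary \ref{j-cor}), and the automorphism property is guaranteed by Lemma \ref{isom}, so the argument is essentially forced once the signs are pinned down; the expansion formulas $j(\varphi)|_M = \sum_{n\ge 0} X^{(n)}\varphi_{n+1}$ and the multiplicativity of restriction to $M$ will do the rest. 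I would carry out the verification by restricting everything to $M$ throughout and comparing coefficients of $X^{(n)}$, which is where Proposition \ref{keylem}'s recursion $g_{n+1} = \sum_i \binom{n}{i} g_i f_{n-i}$ enters.
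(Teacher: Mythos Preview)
Your overall strategy matches the paper's proof exactly: for the forward direction, conjugate the differential and apply Corollary~\ref{j-cor}(2) with $j(\tilde{\d^M})=0$; for the converse, use Proposition~\ref{keylem} to build an automorphism $\varphi$ (invertibility via Lemma~\ref{isom}), conjugate $\d^N$, and check via Corollary~\ref{j-cor}(2) that the new differential has vanishing $j$-operator, whence Lemma~\ref{eq-lem} finishes.

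Two concrete corrections you should make when pinning down the signs. First, since $|h|=-|X|$ is \emph{even}, the differential on $\Hom_B(N,N)$ gives $\d^{\Hom}(h)=\d^N h - h\d^N$, not $\d^N h + h\d^N$; so the coboundary relation is $\Delta_N=\d^N\gamma-\gamma\d^N$, as the paper writes. Second, and more substantively, your choice of conjugation direction does not mesh with Proposition~\ref{keylem}. That proposition hands you $j(\varphi)=\varphi\gamma$ with $\gamma$ sitting on the \emph{right}; plugging this into Corollary~\ref{j-cor}(2) for $\varphi\,\d^N\varphi^{-1}$ gives
\[
j(\varphi\,\d^N\varphi^{-1})=\varphi\bigl(\gamma\,\d^N+\Delta_N-\d^N\gamma\bigr)\varphi^{-1}=0,
\]
which is exactly what you want. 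If instead you conjugate as $\varphi^{-1}\d^N\varphi$ with $j(\varphi)=-\varphi h$, the computation produces $\varphi h\varphi^{-1}$ rather than $h$ in the relevant places, and the coboundary relation $\Delta_N=\d^N h-h\d^N$ no longer forces cancellation. So the correct pairing is: take $\gamma$ with $\Delta_N=\d^N\gamma-\gamma\d^N$, apply Proposition~\ref{keylem} with $f=\gamma$ and $g_0=\mathrm{id}_M$ to get $j(\varphi)=\varphi\gamma$, and set $\d'^N=\varphi\,\d^N\varphi^{-1}$. This is precisely the paper's argument, and once you align the conventions your sketch goes through verbatim.
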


\begin{proof} 
First of all we recall that $N$ is liftable if and only if there is an $A$-derivation $\d^M$ on $M$ of degree $-1$ that makes $(M, \d^M)$ a DG $A$-module and 
there is a DG $B$-isomorphism  $\varphi : N \to B \otimes _A M$. 
In such a case $\varphi$ is a graded $B$-linear isomorphism of degree $0$ that commutes with differentials, i.e., 
$(B \otimes _A \d ^M) \varphi =  \varphi \d ^N$ or equivalently 
\begin{equation}\label{4.7.1}
B \otimes _A \d ^M =  \varphi \d ^N \varphi ^{-1}.
\end{equation}

Now assume that $N$ is liftable. 
Then there is such a DG $B$-isomorphism $\varphi$. 
Taking the $j$-operator for (\ref{4.7.1}) and using Corollay \ref{j-cor} (2), we have that 
$$
0 = j (B \otimes _A \d ^M) =  j(\varphi)  \d ^N \varphi ^{-1} 
+\varphi j(\d ^N)  \varphi ^{-1} + \varphi \d ^N j( \varphi ^{-1}) . 
$$
It thus follows that  $$j(\d ^N) = - \varphi ^{-1} j (\varphi) \d ^N - \d^N j(\varphi ^{-1}) \varphi. $$ 
Here we note form Corollary \ref{j-cor} (1) that  $\varphi ^{-1} j (\varphi) = - j(\varphi ^{-1}) \varphi$.
Therefore if we set  $f = \varphi ^{-1} j(\varphi )$,  then we see that  $|f|=-|X|$ is even and  $\Delta _N = j(\d ^N) = \d ^N f-f \d ^N $. 
The last equality shows  $[\Delta _N]=0$  in $\Ext _B^{|X|+1} (N, N)$.

Contrarily assume that $[\Delta_N]=0$.
Then there is a graded $B$-linear endomomorphism $\gamma$ on $N$ of degree $-|X|$,  which satisfies the equality 
\begin{equation}\label{delta}
\Delta_N = \d^N \gamma - \gamma \d^N. 
\end{equation}
We note that $|\Delta_N|$ is odd and $|\gamma|$ is even. 
It follows from Proposition \ref{keylem} that there is a $B$-linear endomorphism $\varphi$ on $N$ of degree $0$ such that $\varphi_0=\mathrm{id}_M$ and
\begin{equation}\label{key}
j(\varphi) = \varphi\gamma.
\end{equation}
We should note from Lemma \ref{isom} that such $\varphi$ is a $B$-linear automorphism on $N$.
Define an alternative differential $\d'^N$ on $N$ by 
$$\d'^{N}=\varphi\d^N\varphi^{-1}.$$
Then it follows that $\varphi:(N,\d^N)\to(N,\d'^{N})$ is a DG $B$-isomorphism.

Since the equality $j(\varphi^{-1})\varphi+\varphi^{-1}j(\varphi)=0$
holds by Corollary \ref{j-cor} $(1)$, 
we see from  (\ref{key}) that  
\begin{equation}\label{j2}
j(\varphi^{-1})=-\gamma \varphi^{-1}.
\end{equation}
Thus we conclude that 
$$j(\d'^N)=j(\varphi\d^N\varphi^{-1})=\varphi(\gamma\d^N+\Delta_N-\d^N\gamma)\varphi^{-1}=0, $$
which means that $(N,\d'^N)$ equals $B \otimes _AM$  with $M$ having DG $A$-module structure defined by  $\d ^M = \d'^N |_M$. 
See Lemma \ref{eq-lem}. 
Hence $(N,\d^N) \cong B \otimes _A M$ as DG $B$-modules. 
This proves that $N$ is liftable to $A$.

\end{proof}

In the rest of the paper we consider the uniqueness of liftings.
The following lemma will be necessary for this purpose. 

\begin{lem}\label{cycle}
Let $M$ and $M'$ be DG $A$-modules, and let $\varphi: B\otimes_A M \to B\otimes_A M'$ be a graded $B$-linear homomorphism of degree $0$.
Assume we have an expansion $\varphi|_M=\sum_{i \ge0} X^{(i)}\varphi_i$, where each $\varphi _i : M \to M$ is a graded $A$-linear homomorphism. 
Then the following statements hold $:$
\begin{enumerate}
\item[$(1)$] 
$\varphi$ is a cycle in $\Hom_B(B\otimes_A M,B\otimes_A M')$ if and only if the following equalities hold for $i \ge 0$$:$ 
$$\varphi_i\d^{M}=t\varphi_{i+1}+\d^{M'}\varphi_i.$$ 
\item[$(2)$]  
$\varphi$ is a boundary in $\Hom_B(B\otimes_A M,B\otimes_A M')$ if and only if there is a graded $B$-linear homomorphism $\gamma$ of degree $1$ 
such that 
$\gamma$ has an expansion  $\gamma|_M=\sum_{i \ge0}X^{(i)}\gamma_i$, and there are equalities for $i \ge 0$$:$ 
$$
\varphi_{i}=\gamma_i\d^M+\d^{M'}\gamma_i + t\gamma_{i+1}. 
$$ 
\end{enumerate}
\end{lem}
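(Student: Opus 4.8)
The strategy is to unwind both conditions via the differential on $\Hom_B(B\otimes_A M, B\otimes_A M')$ and then compare expansions along the decomposition $B\otimes_A M = \bigoplus_{i\ge0} X^{(i)}M$, exactly as in the computations of Section 3. For part $(1)$, recall that $\varphi$ is a cycle precisely when $\d^{\Hom_B(B\otimes_A M, B\otimes_A M')}(\varphi) = \d^{B\otimes_A M'}\varphi - (-1)^{|\varphi|}\varphi\,\d^{B\otimes_A M} = 0$; since $|\varphi|=0$ this is $\d^{B\otimes_A M'}\varphi = \varphi\,\d^{B\otimes_A M}$. By Lemma \ref{AB-linear}(1), a $B$-linear map is determined by its restriction to $M$, so it suffices to evaluate both sides on $m\in M$. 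The left side gives $\d^{B\otimes_A M'}\bigl(\sum_i X^{(i)}\varphi_i(m)\bigr) = \sum_i X^{(i)}\bigl(t\varphi_{i+1}(m) + \d^{M'}\varphi_i(m)\bigr)$ using the explicit formula for $d^B$ on the divided-power extension together with the derivation property, while the right side gives $\varphi\bigl(\d^M(m)\bigr) = \sum_i X^{(i)}\varphi_i\bigl(\d^M(m)\bigr)$. Equating the $X^{(i)}$-components yields $\varphi_i\d^M = t\varphi_{i+1} + \d^{M'}\varphi_i$ for all $i\ge0$, and conversely these equalities reassemble into $\d^{B\otimes_A M'}\varphi = \varphi\,\d^{B\otimes_A M}$. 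One must be slightly careful with the sign in the derivation property, but since $|\varphi_i|$ is even (it has degree $-i|X|$ and $|X|$ is even) all the signs are trivial here.

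For part $(2)$, $\varphi$ is a boundary iff $\varphi = \d^{\Hom_B}(\gamma)$ for some $\gamma\in\Hom_B(B\otimes_A M, B\otimes_A M')_1$, i.e. $\varphi = \d^{B\otimes_A M'}\gamma - (-1)^{1}\gamma\,\d^{B\otimes_A M} = \d^{B\otimes_A M'}\gamma + \gamma\,\d^{B\otimes_A M}$. Again by Lemma \ref{AB-linear}(1) it suffices to restrict to $M$. Writing $\gamma|_M = \sum_i X^{(i)}\gamma_i$ with $\gamma_i\colon M\to M'$ graded $A$-linear of degree $1 - i|X|$, and applying the same computation as in $(1)$ — the formula for $d^B$ contributes the $t\gamma_{i+1}$ term, the derivation property contributes $\d^{M'}\gamma_i$, and precomposition with $\d^M$ contributes $\gamma_i\d^M$ — we get $\varphi_i = \gamma_i\d^M + \d^{M'}\gamma_i + t\gamma_{i+1}$ for all $i\ge0$. (Here one should double-check that the $B$-linearity of $\gamma$ forces precisely this expansion and that conversely any choice of $A$-linear $\gamma_i$ of the right degrees assembles to a genuine $B$-linear $\gamma$ via $\gamma = B\otimes_A(\gamma|_M)$; this is the content of Lemma \ref{extension}-style reasoning.) The converse direction is immediate: given such a $\gamma$, the displayed equalities say exactly that $\varphi|_M = (\d^{\Hom_B}\gamma)|_M$, whence $\varphi = \d^{\Hom_B}\gamma$ by Lemma \ref{AB-linear}(1).

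The only genuine subtlety — and the step I would be most careful about — is bookkeeping the Koszul signs in $\d^{B\otimes_A M'}$ and in the differential of $\Hom_B$ when the restricted maps $\varphi_i,\gamma_i$ have odd degree (which happens for $\gamma_i$ when $i|X|$ is even, i.e. always, since $|X|$ is even, so $|\gamma_i|$ is odd). However, because $N = B\otimes_A M$ is built from divided powers of the single even-degree variable $X$, and $d^B(X^{(i)}) = tX^{(i-1)}$ carries no sign, the relevant signs all reduce to the ones already handled in Proposition \ref{j-prop} and Remark \ref{remark on D}(2); so the computation is genuinely routine once the conventions are fixed. No boundedness or finiteness hypothesis is needed, and $N$ need not be semi-free here — only that $B\otimes_A M$ and $B\otimes_A M'$ make sense, which they do by hypothesis.
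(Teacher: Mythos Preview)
Your proposal is correct and follows essentially the same route as the paper: compute $\varphi\,\tilde{\partial}^{M}|_M$ and $\tilde{\partial}^{M'}\varphi|_M$ (respectively $\varphi|_M$ and $(\gamma\tilde{\partial}^M + \tilde{\partial}^{M'}\gamma)|_M$) via the decomposition $\bigoplus_i X^{(i)}M$, and then invoke Lemma~\ref{AB-linear} on the $B$-linear difference to pass between the restricted and global equalities. The only cosmetic difference is that the paper packages the two key computations as displayed equalities (\ref{a}) and (\ref{b}) up front and then reads off both parts, whereas you narrate each part separately; your extra remarks on sign bookkeeping (all trivial since $|X|$ is even) and on why no boundedness is needed are accurate and not in conflict with the paper's argument.
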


\begin{proof}
A direct computation implies that 
\begin{equation}\label{a}
\varphi\tilde{\d}^M|_M=\varphi\d^M=\sum_{i \ge0}X^{(i)}\varphi_{i}\d^M, 
\end{equation}
where  $\tilde{\d^M}$ is the extended derivation of $\d^M$ to $B \otimes _A M$ by means of Lemma \ref{extension}. 
On the other hand,
\begin{equation}\label{b}
\tilde{\d}^{M'}\varphi|_M  =\tilde{\d}^{M'}\left( \sum_{i \ge0}X^{(i)}\varphi_i\right)
=\sum_{i \ge 0}X^{(i)}\left\{t\varphi_{i+1}+\d^{M'}\varphi_{i} \right\}.
\end{equation}

$(1)$
Since $\varphi\tilde{\d}^M-\tilde{\d}^{M'}\varphi$ is in $\mathcal{E}$, 
the cycle condition $\varphi\tilde{\d}^M-\tilde{\d}^{M'}\varphi=0$ is equivalent to that $\varphi\tilde{\d}^M|_M-\tilde{\d}^{M'}\varphi|_M=0$ by Lemma \ref
{AB-linear}.
Therefore the right hand sides of (\ref{a}) and (\ref{b}) are equal. 

$(2)$
$\varphi$ is a boundary if and only if there exists a graded $B$-linear homomorphism $\gamma$ of degree $1$ such that $\varphi=\gamma\tilde{\d}^M+\tilde{\d}^{M'}\gamma$.
Because $\varphi$ and $\gamma\tilde{\d}^M+\tilde{\d}^{M'}\gamma$ are belonging to $\mathcal{E}$, this is equivalent to that $\varphi|_M=(\gamma\tilde{\d}^M + \tilde{\d}^{M'}\gamma)|_M$ by Lemma \ref{AB-linear}.
Then the same argument as in $(1)$ using (\ref{a}) and (\ref{b}), we can show the desired equalities.
\end{proof}

\begin{thm}\label{unique}
Let $N$ be a semi-free DG $B$-module as before, and assume that  $N$ is liftable to $A$. 
If $\Ext^{|X|}_B(N,N)=0$, then a lifting of $N$ is unique up to DG isomorphisms over $A$.
\end{thm}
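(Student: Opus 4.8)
The plan is to suppose $M$ and $M'$ are two liftings of $N$, fix a DG $B$-isomorphism $\Phi : B\otimes_A M \to B\otimes_A M'$ (obtained by composing the given isomorphisms $B\otimes_A M \cong N \cong B\otimes_A M'$), and then modify $\Phi$ so that it becomes induced from $A$, i.e.\ of the form $B\otimes_A \psi$ for a DG $A$-isomorphism $\psi : M \to M'$. Writing the expansion $\Phi|_M = \sum_{i\ge 0} X^{(i)}\Phi_i$, Lemma \ref{cycle}(1) tells us that the cycle condition for $\Phi$ reads $\Phi_i\,\d^M = t\Phi_{i+1} + \d^{M'}\Phi_i$ for $i\ge 0$; in particular the constant term satisfies $\Phi_0\,\d^M - \d^{M'}\Phi_0 = t\Phi_1$. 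If we could arrange $\Phi_i = 0$ for all $i\ge 1$, then $\Phi_0$ would be a chain map, and by Lemma \ref{conv-isom} it would be an $A$-linear automorphism on the underlying module, hence a DG $A$-isomorphism $M \to M'$, finishing the proof.

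The mechanism for killing the higher terms is to replace $\Phi$ by $\Phi' = (B\otimes_A \Phi_0^{-1})\circ\Phi$ so that WLOG $\Phi_0 = \mathrm{id}$, and then to correct $\Phi$ by a coboundary. Since $\Phi$ is a cycle, the obstruction to $\Phi$ being induced from $A$ should live in a suitable piece of $H_{-|X|}\big(\Hom_B(B\otimes_A M, B\otimes_A M')\big) = \Ext_B^{|X|}(B\otimes_A M, B\otimes_A M') \cong \Ext_B^{|X|}(N,N)$, which vanishes by hypothesis. Concretely, I expect the argument to proceed by an iterative descent: having arranged $\Phi_i = 0$ for $1\le i < k$ (with $\Phi_0=\mathrm{id}$), the cycle equations force $\Phi_k$ to be a chain map $M\to M'$ of degree $-|X|\cdot$(shift), i.e.\ a cycle in $\Hom_A(M,M')$ of the appropriate degree; one then uses the vanishing of the relevant $\Ext$ group — transported through $\Ext_B^{|X|}(N,N)=0$ via the identification $\Hom_B(B\otimes_A M, -) \cong \Hom_A(M,-)$ — to write $\Phi_k$ as a boundary, and absorbs that boundary into $\Phi$ using Lemma \ref{cycle}(2), which strictly decreases the order of vanishing. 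One must check this process converges: because $M$ is bounded below and $|X|>0$, on each graded piece $M_r$ only finitely many $\Phi_i$ are nonzero (the same boundedness argument as in Lemma \ref{isom}), so the infinite composition of corrections is locally finite and the resulting modified $\Phi$ is still a $B$-linear isomorphism by Lemma \ref{isom}.

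The main obstacle I anticipate is making the "transport of the $\Ext$ vanishing" precise and bookkeeping the degree shifts correctly: $\Ext_B^{|X|}(N,N)=0$ is a statement about one fixed cohomological degree, whereas the successive obstructions $\Phi_k$ naturally sit in $\Hom_A(M,M')$ in varying internal degrees, and one needs to see that each such obstruction is controlled by that single $\Ext$ group (for instance by the adjunction $\Hom_B(B\otimes_A F_M, N)\cong \Hom_A(F_M, N)$ applied to a semi-free resolution $F_M\to M$ over $A$, which remains semi-free after tensoring with $B$). Alternatively — and this may be cleaner — one can bypass the iteration entirely: treat the whole "tail" $\sum_{i\ge 1}X^{(i)}\Phi_i$ at once, observe that the cycle conditions say exactly that the $B$-linear map $\Phi - B\otimes_A\mathrm{id}_M$ (after the normalization $\Phi_0=\mathrm{id}$, when $M=M'$) is a cycle of degree $0$ built from a degree $-|X|$ datum via the $j$-operator, and apply Proposition \ref{keylem} together with Corollary \ref{j-cor} in the same spirit as the proof of Theorem \ref{main} to realize it as $\varphi\d^N - \d^N\varphi$ for some $\varphi$; the hypothesis $\Ext_B^{|X|}(N,N)=0$ is precisely what guarantees the relevant class vanishes. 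Either way, the crux is identifying the obstruction group with $\Ext_B^{|X|}(N,N)$ and handling the general case $M\ne M'$ by first reducing to $M=M'$ through the fixed isomorphism.
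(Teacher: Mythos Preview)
Your plan does not yet locate the mechanism the paper actually uses, and in its present form it has two genuine gaps.

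First, the iterative scheme you sketch would require you to kill $\Phi_k$ for every $k\ge 1$, and $\Phi_k$ has degree $-k|X|$; the obstruction to making it a boundary lives in $\Ext^{k|X|}$, not in $\Ext^{|X|}$. You flag this degree-bookkeeping worry yourself but do not resolve it, and in fact the single hypothesis $\Ext_B^{|X|}(N,N)=0$ cannot by itself drive that induction. Moreover, your convergence argument invokes bounded-belowness of $N$, but Theorem \ref{unique} carries no such assumption (contrast Theorem \ref{main}); so even if the iteration worked, you would be proving a weaker statement. Finally, the normalization step ``compose with $B\otimes_A\Phi_0^{-1}$'' is delicate: $\Phi_0$ is only a graded $A$-isomorphism, not a chain map, so $B\otimes_A\Phi_0^{-1}$ is not a DG map and the composite need not remain a cycle.

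The paper's argument avoids all of this by never trying to force $\Phi$ into the form $B\otimes_A\psi$. Instead it applies the $j$-operator once: since $\varphi$ is a cycle and $j(\tilde\d^M)=j(\tilde\d^{M'})=0$, Proposition \ref{j-prop} gives that $j(\varphi)$ is itself a cycle of degree $-|X|$ in $\Hom_B(B\otimes_AM,B\otimes_AM')$. The hypothesis $\Ext_B^{|X|}(N,N)=0$ then makes $j(\varphi)$ a boundary, say $j(\varphi)=\gamma\tilde\d^M+\tilde\d^{M'}\gamma$. Reading off constant terms via Lemma \ref{cycle} yields $\varphi_0\d^M-\d^{M'}\varphi_0=t\varphi_1$ and $\varphi_1=\gamma_0\d^M+\d^{M'}\gamma_0+t\gamma_1$. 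Now the decisive point you are missing: $|t|$ is odd, so $t^2=0$, and substituting gives directly $(\varphi_0-t\gamma_0)\d^M=\d^{M'}(\varphi_0-t\gamma_0)$. Thus $\varphi_0-t\gamma_0$ is already a DG $A$-homomorphism $M\to M'$; it is invertible because $\varphi_0$ is (Lemma \ref{conv-isom}) and $t$ is nilpotent, with explicit inverse $\varphi_0^{-1}+t\varphi_0^{-1}\gamma_0\varphi_0^{-1}$. No iteration, no Proposition \ref{keylem}, no boundedness. The single use of $\Ext^{|X|}=0$ matches the single application of $j$, and the fact that $t^2=0$ is what collapses what you envisioned as an infinite process into one step.
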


\begin{proof}
Assume that there are a couple of liftings  $(M,\d^M)$ and $(M',\d^{M'})$ of $N$. 
Then there is a DG $B$-isomorphism $\varphi:(B\otimes_A M,\tilde{\d}^{M}) \to (B\otimes_A M',\tilde{\d}^{M'})$,  where  $\tilde{\d}^{M}$, $\tilde{\d}^{M'}$ are the extended differentials of $\d^M$, $\d^{M'}$ respectively.
(See Lemma \ref{extension}.) 
We take an expansion $\varphi|_M=\sum_{i\in\Z_{\ge 0}} X^{(i)} \varphi_i$.
Since $\varphi$ is a cycle of degree $0$ in $\Hom_B(B\otimes_A M, B \otimes_A M')$, 
Lemma \ref{cycle} implies the equality  
$\varphi_n\d^M-\d^{M'}\varphi_n=t\varphi_{n+1}$ holds for each $n\ge 0$.
In particular, we have 
\begin{equation}\label{eq1}
\varphi_0\d^M-\d^{M'}\varphi_0=t\varphi_1.
\end{equation}
Since there is an equality $\varphi\tilde{\d}^M=\tilde{\d}^{M'}\varphi$, and since $j(\tilde{\d}^M)=0=j(\tilde{\d}^{M'})$, 
Proposition \ref{j-prop} leads that $j(\varphi)\tilde{\d}^{M}|_M=\tilde{\d}^{M'}j(\varphi)|_M$. 
It hence follows that 
$$j(\varphi)\tilde{\d}^{M}=\tilde{\d}^{M'}j(\varphi),$$
because 
$j(\varphi)\tilde{\d}^{M}- \tilde{\d}^{M'}j(\varphi)$ is  $B$-linear.
Thus  $j(\varphi)$ is a cycle of degree $-|X|$ in $\Hom_B(B\otimes_A M,B\otimes_A M')$, and it defines the element $[j(\varphi)]$  of  the homology module $H_{-|X|} \left( \Hom _B(B\otimes_A M,B\otimes_A M')\right)$. 
Regarding $N$ as a semi-free resolution of $B \otimes _A M$ and  $B \otimes _A M'$, we see that  
$\Hom_B(B\otimes_A M,B\otimes_A M')$ is quasi-isomorphic to $\Hom_B(N,N)$. 
Since we assume $\Ext^{|X|}_B(N,N)=0$, we have $[j(\varphi)]=0$.
Hence there is a graded $B$-linear homomorphism $\gamma:B\otimes_A M\to B\otimes_A M'$ of odd degree $-|X|+1$  such that $j(\varphi)=\gamma\tilde{\d}^M + \tilde{\d}^{M'}\gamma$.
Write $\gamma|_M=\sum_{i \ge 0} X^{(i)} \gamma_i$, and we get from Lemma \ref{cycle} that 
$\varphi_{n+1}=\gamma_n\d^M + \d^{M'}\gamma_n + t\gamma_{n+1}$ for $n\ge 0$.
In particular we have  
\begin{equation}\label{eq2}
\varphi_1=\gamma_0\d^M + \d^{M'}\gamma_0 + t\gamma_1.
\end{equation}
Note that $t^2=0$, because $|t|$ is odd. 
Then we obtain from (\ref{eq1}) and (\ref{eq2}) the equality  
$$(\varphi_0-t\gamma_0)\d^M=\d^{M'}(\varphi_0-t\gamma_0).$$
Namely $\varphi_0-t\gamma_0:(M,\d^M) \to (M',\d^{M'})$ is a  DG A-homomorphism.
Since $\varphi$ is a graded $B$-linear isomorphism,
$\varphi_0$ is an $A$-linear isomorphism as well by Lemma \ref{conv-isom}.  
Then it follows that $\varphi_0-t\gamma_0:M\to M'$ is an $A$-linear isomorphism, 
since $\varphi_0^{-1}+t\varphi_0^{-1}\gamma_0\varphi_0^{-1}$ gives its inverse. 
Therefore $\varphi_0-t\gamma_0:M\to M'$ is a DG isomorphism over $A$.
This completes the proof.
\end{proof}

\begin{ex}
Let  $R$  be a commutative ring and $x, y$ be elements of $R$.
Assume that the equalities of ideals;  $\mathrm{Ann}_R(x)=yR$ and $\mathrm{Ann}_R(y)=xR$ hold. 
Define $A$ to be  the extended DG $R$-algebra obtained from $R$ by the adjunction of the variable $Y$ of degree $1$ 
to kill the cycle  $y$, that is, 
$$A=R\langle Y|dY=y\rangle. $$
Further we denote by $B$ the extended DG algebra of $A$ by the adjunction of the variable $X$ of degree $2$ that kills the cycle $xY$, that is, 
$$B=A\langle X |dX= xY\rangle = R\langle X, Y | dY = y, dX= xY\rangle.$$
Under such circumstances, $B$ gives a DG $R$-algebra resolution of $S=R/yR$. 
Equivalently there is a DG $R$-algebra morphism $B\to S$ which is a quasi-isomorphism. 
See \cite{T}.
Now we assume that an $S$-module $N$ satisfies the condition $\Ext_S^3(N,N)=0$.
Then $N$ is regarded as a DG $B$-module through $B\to S$.
Taking a semi-free resolution $F_N \to N$ over $B$.
It is known from  \cite[(1.6)]{AFL} or  \cite[(1.3)]{AS}  that  $\Ext_S^3(N,N) \cong \Ext_B^3(F_N,F_N)$. 
Therefore our main theorem (Theorem \ref{main}) forces the existence of a semi-free DG $A$-module $M$ with the property       $F_N\cong B\otimes_A M$ as DG $B$-modules. 
\end{ex}

\end{document}